\newcommand{\op}{\operatorname}
\newcommand{\diam}{\op{sdiam}}
\newcommand{\rad}{\operatorname{srad}}
\newcommand{\e}{\op{e}}
\newcommand{\old}[1]{{}}
\newtheorem{thm}{Theorem}
\newtheorem{cor}[thm]{Corollary}
\newtheorem{lem}[thm]{Lemma}
\newtheorem{defn}[thm]{Definition}
\newtheorem{claim}[thm]{Claim}
\newtheorem{prop}[thm]{Proposition}
\newtheorem{obs}[thm]{Observation}
\newtheorem{conj}[thm]{Conjecture}
\title{The Steiner $k$-radius and Steiner $k$-diameter of connected graphs for $k\geq 4$}
\author{Josiah Reiswig}
\address{Josiah Reiswig\\ Department of Mathematics \\ Anderson University
\\ Anderson SC 29621 \\ USA}
\email{jreiswig@andersonuniversity.edu}
\thanks{
The author was also supported in part by the National
Science Foundation contract DMS-1600811.
}
\begin{document}

\begin{abstract}
Given a connected graph $G=(V,E)$ and a vertex set $S\subset V$, the {\em 
Steiner distance} $d(S)$ of $S$ is the size of a minimum spanning tree of $S$ in 
$G$. For a connected graph $G$ of order $n$ and an integer $k$ with $2\leq k 
\leq n$, the $k$-eccentricity of a vertex $v$ in $G$ is the maximum value 
of $d(S)$ over all $S\subset V$ with $|S|=k$ and $v\in S$. The minimum 
$k$-eccentricity, $\rad_k(G)$, is called the $k$-radius of $G$ while the maximum 
$k$-eccentricity, $\diam_k(G)$, is called the $k$-diameter of $G$. In 1990, 
Henning, Oellermann, and Swart
[\textit{Ars Combinatoria} \textbf{12} 13-19, (1990)]
showed that there exists a graph $H_k$ such that 
$\diam_k(H_k) = \frac{2(k+1)}{2k-1}\rad_k(H_k)$. The 
authors also conjectured that for any $k\geq 2$ and connected graph $G$
$\diam_k(G) \leq \frac{2(k+1)}{2k-1}\rad_k(G)$. The authors
provided proofs of the conjecture for $k=3$ and $4$. Their proof for $k=4$,
however, was incomplete. In this note, 
we disprove the conjecture for $k\geq 5$ by proving that the bound 
$\diam_k(G)\leq \frac{k+3}{k+1}\rad_k(G)$ is tight for $k\geq 5$.
We then provide a complete proof for $k=4$ and
identify the error in the previous proof of this case.
\end{abstract}

\maketitle

\section{Introduction and Notation}

Given a graph $G=(V,E)$ with vertex set $V=V(G)$ and edge set $E=E(G)$, we let 
$|G|=|V(G)|$ denote the order of $G$ and $\|G\|=|E(G)|$ denote the size of $G$. 
The distance in $G$ between two vertices $u,v\in V$, denoted $d_G(u,v)$, is the 
length of the shortest path in $G$ between $u$ and $v$. If there is no path 
between $u$ and $v$, we say that $d_G(u,v)=\infty$. The \textit{eccentricity} of 
a vertex $v$ in $G$ is defined as $\e(v):=\max\{d_G(u,v):u\in V(G)\}$. The 
\textit{radius} $\op{rad}(G)$ is defined as $\min\{\e(v):v\in V(G)\}$ and the 
\textit{diameter} of $G$ $\op{diam}(G)$ is defined as $\max\{\e(v):v\in V(G)\}$. The center of 
$G$, denoted $C(G)$, is the subset of vertices $v\in G$
such that $e(v)=\rad(G)$. 
If $H$ is a subgraph of $G$ and $v\in V(G)$, then \textit{the distance from $v$ 
to $H$}, denoted $d_G(v,H)$, is defined as $\min\{d_G(v,u):u\in V(H)\}$. The \textit{neighborhood} of a vertex $v$, is defined as $N_G(v) := \{u:uv\in E(G)\}$.

The distance between two vertices $v$ and $u$ can be viewed as the minimal size 
of a connected subgraph (in this case, a path) of $G$ containing $v$ and $u$. This suggests a 
generalization of distance. Introduced in \cite{CHA89}, the \textit{Steiner 
distance} in $G$ of a non-empty set $S\subset V(G)$, denoted $d_G(S)$, is defined 
as the size of the smallest connected subgraph of $G$ containing all elements of 
$S$. Necessarily, such a minimum subgraph must be a tree.
When the context is clear, we simply write $d_G(S)$ as $d(S)$.

Given an integer $k\geq 2$, the \textit{Steiner $k$-eccentricity} of a vertex 
$v$ in $G$, denoted $\e_k(v)$, is defined as the maximum Steiner distance of all 
vertex subsets of $G$ of size $k$ containing $v$. More succinctly,
$
\e_k(v)=\max_{S\subset V(G),|S|=k}\{d(S):v\in S\}.
$
The \textit{Steiner $k$-radius}, denoted $\rad_k(G)$, is then defined as 
$
\rad_k(G):=\min\{e_k(v):v\in G\},
$
while the \textit{Steiner $k$-diameter}, denoted $\diam_k(G)$ is then defined as 
$
\diam_k(G):=\max\{e_k(v):v\in G\}.
$
The Steiner $k$-center, $C_k(G),$ is the subgraph 
induced by all vertices $v$ with $e_k(v)=\rad_k(G)$.
For a general connected graph, 
the following connection between the Steiner distance and the standard
distance is immediate.

\begin{obs}\label{obs}
If $G$ is a connected graph and $v\in V(G)$, then $\e_2(v)=\e(v)$, 
$\rad_2(G)=\op{rad}(G)$, $\diam_2(G)=\op{diam}(G)$, and $C_2(G)=C(G)$. 
\end{obs}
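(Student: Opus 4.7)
The plan is to observe that everything in this statement follows from a single identification: for a two-element set $S=\{u,v\}$, the Steiner distance $d(S)$ coincides with the ordinary graph distance $d_G(u,v)$. Once this is in hand, all four equalities fall out by unwinding definitions.

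First I would argue that $d(\{u,v\})=d_G(u,v)$. The minimum connected subgraph containing $\{u,v\}$ is a tree, and any tree spanning just two vertices must contain a $u$-$v$ path, so its size is at least $d_G(u,v)$. Conversely, any shortest $u$-$v$ path is itself a connected subgraph of size $d_G(u,v)$ containing both vertices, giving the reverse inequality. Hence equality holds.

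With this identification, $\e_2(v)=\max\{d(\{u,v\}):u\in V(G)\}=\max\{d_G(u,v):u\in V(G)\}=\e(v)$ by definition. Taking the minimum (respectively maximum) over $v\in V(G)$ then yields $\rad_2(G)=\rad(G)$ and $\diam_2(G)=\diam(G)$. Finally, the condition $\e_2(v)=\rad_2(G)$ defining $C_2(G)$ becomes $\e(v)=\rad(G)$, which defines $C(G)$, giving $C_2(G)=C(G)$.

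There is no real obstacle here; the only subtlety worth stating explicitly is why a minimum connected subgraph on two specified vertices is a shortest path rather than something more exotic, and that is immediate from the fact that a minimum such subgraph is a tree, together with the uniqueness of the $u$-$v$ path in any tree.
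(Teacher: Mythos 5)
Your proposal is correct, and it simply fills in the details of what the paper states as immediate without proof: the identification $d(\{u,v\})=d_G(u,v)$ followed by unwinding the definitions is exactly the intended argument.
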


Much research has been focused on bounding the Steiner $k$-diameter by
other parameters.
Danklemann, Swart, and
Oellermann in \cite{DAN99}
gave upper bounds for $\diam_k(G)$ with respect to
the minimum degree and complement of $G$. 
In \cite{ALI13}, Ali gave an upper bounds for
$\diam_k(G)$ with respect to the girth of $G$. In this paper, we
will bound $\diam_k(G)$ with respect to $\rad_k(G)$. 

In light of Observation~\ref{obs}, it is well known that 
$\diam_2(G)\leq 2\rad_2(G).$
In their paper introducing the Steiner distance, the authors of 
\cite{CHA89} generalized this result by showing
that for any tree $T$,
$$
\diam_k(T)\leq \frac{k}{k-1}\rad_k(T).
$$
The authors conjectured that this result extended to all connected graphs. In 1990,
however, Henning, Oellermann, and Swart \cite{HEN90} showed via construction
that for each $k\geq 2$, there exists a graph $G^*$ satisfying 
$
\diam_k(G^*) = \frac{2(k+1)}{2k-1}\rad_k(G^*).
$
Furthermore, they conjectured that this gap was largest possible.
\begin{conj}[See \cite{HEN90}] \label{bad}
Suppose that $G$ is a connected graph with order at least $k$. Then 
$$
\diam_k(G)\leq \frac{2(k+1)}{2k-1}\rad_k(G).
$$
\end{conj}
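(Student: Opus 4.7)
The natural starting point is to generalize the elementary proof of $\diam_2(G)\le 2\rad_2(G)$. Let $c\in V(G)$ be a $k$-center, so $\e_k(c)=\rad_k(G)$, and let $S\subseteq V(G)$ be a $k$-set realizing $d(S)=\diam_k(G)$. If $c\in S$ then $d(S)\le \rad_k(G)$ and the bound holds with room to spare, so I assume $c\notin S$. For each $s\in S$ form the $k$-set $S_s:=(S\setminus\{s\})\cup\{c\}$; because $c\in S_s$, there is a Steiner tree $T_s$ of $S_s$ with $\|T_s\|\le \rad_k(G)$. All of the $T_s$ share the vertex $c$, so $\bigcup_{s\in S}T_s$ is a connected subgraph containing $\{c\}\cup S$, and in particular contains a Steiner tree of $S$ as a subgraph. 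The naive inequality $\sum_s \|T_s\|\le k\,\rad_k(G)$ only gives $d(S)\le k\,\rad_k(G)$; to reach the target constant $\tfrac{2(k+1)}{2k-1}$, the plan must exploit that many edges of $\bigcup_s T_s$ are shared between several $T_s$.

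I would therefore run a double-counting argument against a fixed Steiner tree $T^*$ of $S$. Orient each $T_s$ away from $c$. Every edge $e\in T^*$ separates $S$ into sides $A_e$ and $B_e$; for a vertex $s$ on one side, $T_s$ must connect $c$ to the vertices of $S\setminus\{s\}$ on the opposite side of $e$, forcing $T_s$ to use at least one edge of $G$ crossing the cut associated with $e$. Choosing a well-placed ``pivot'' vertex $p$ of $T^*$, I would hope to show that the edges of $T^*$ on each side of $p$ are each used by many of the $T_s$, giving a lower bound of the form
\[
\sum_{s\in S}\|T_s\|\;\ge\;\frac{k(2k-1)}{2(k+1)}\,d(S).
\]
Combined with $\sum_s\|T_s\|\le k\,\rad_k(G)$, this would yield exactly $d(S)\le \tfrac{2(k+1)}{2k-1}\rad_k(G)$.

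The main obstacle, and the likely source of the error in the original $k=4$ argument of \cite{HEN90}, is that the crossing edge forced inside $T_s$ need not be an edge of $T^*$: there can be shortcuts in $G$, and furthermore a single edge of $T_s$ can simultaneously serve as a crossing for several edges of $T^*$ when $c$ lies close to $T^*$. This makes the clean averaging above false in general. For $k=4$ the possible shapes of $T^*$ are few, and I would pursue the proof by a detailed case analysis on the structure of $T^*$ (a path on four vertices, a spider $K_{1,3}$, a ``double broom,'' etc.), on whether $c$ attaches near a leaf or near an interior vertex of $T^*$, and on which elements of $S$ appear as leaves versus internal vertices of $T^*$. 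Within each case I would compare $\sum_s \|T_s\|$ to $d(S)$ directly by local surgery on the $T_s$, paying special attention to the configurations where two $T_s$ overlap only on a single short path through $c$, since those appear to be precisely the ones mishandled in the original argument.

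For $k\ge 5$ I expect the averaging scheme to genuinely break, and the plan pivots toward construction: starting from the extremal graphs of \cite{HEN90}, I would try to attach additional pendant branches or copies of the extremal gadget in a way that inflates $\diam_k$ without decreasing the eccentricity of a preferred vertex. Any such construction achieving the ratio $\tfrac{k+3}{k+1}$ would simultaneously disprove Conjecture~\ref{bad} for $k\ge 5$ and single out $\tfrac{k+3}{k+1}\rad_k(G)$ as the true sharp bound, which I would then try to prove separately by a modified, weaker version of the double-counting above.
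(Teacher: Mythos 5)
There is a genuine gap: what you have written is a research plan, not a proof, and you say so yourself at the two places where the mathematics would have to happen. Your central estimate $\sum_{s\in S}\|T_s\|\ge \frac{k(2k-1)}{2(k+1)}\,d(S)$ is something you ``hope to show,'' and two sentences later you concede that the cut-crossing argument behind it ``is false in general.'' Nothing replaces it. For the disproof side, you correctly guess that the conjecture fails for large $k$ and even name the ratio $\frac{k+3}{k+1}$, but no construction is exhibited, and the ratio cannot be extracted from anything in your argument. The entire content of the disproof is the explicit graph $G_k$: two cliques' worth of terminals $D_1,D_2$ overlapping in a single vertex $d_{\lceil (k+1)/2\rceil}$, each terminal block served by private connector vertices ($a_i$ adjacent to $D_1\setminus\{d_i\}$, $b_j$ adjacent to $D_2\setminus\{d_j\}$), all connectors joined to a hub $r$; one then checks $\e_k(r)\le k+1$ while $\e_k(d_1)\ge k+3$. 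Without that (or some equally concrete gadget) there is no counterexample. Note also that at $k=5$ the two ratios coincide ($\frac{2(k+1)}{2k-1}=\frac{k+3}{k+1}=\frac43$), so a tight construction at $k=5$ does not by itself refute the conjectured constant there; the strict violation only starts at $k=6$.

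On the upper-bound side, the mechanism that actually works is different from your cut-crossing scheme and worth recording. One fixes a single tree $T_1$, a Steiner tree for $D_1=(D\setminus\{v_1\})\cup\{v_0\}$, and lower-bounds $2\|T_1\|$ by the length of a closed depth-first traversal that visits the terminals in some order $u_1,\dots,u_{k-1}$. The inputs are pairwise lower bounds proved by \emph{augmentation}, not by cuts: gluing $T_j$ to the $v_0$--$v_j$ path inside $T_1$ spans $D$, giving $d_{T_1}(v_0,v_j)>\frac{p-1}{p}\diam_k(G)$, and gluing $T_j'$ (the subtree of $T_j$ spanning $D_j\setminus\{v_0\}$) to a $v_i$--$v_j$ path gives $d_{T_1}(v_i,v_j)>\frac{p-1}{p}\diam_k(G)+\ell_1$, where $\ell_1$ is the overhang from $v_0$ to the rest of the tree. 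The constant $\frac{k+3}{k+1}$ then falls out of splitting $T_1$ at the branch vertex nearest $v_0$ into two subtrees $H_1,H_2$ sharing only the length-$\ell_1$ stem and adding the two traversal bounds. For $k=4$ the same distance lemmas are used, but the closing argument is a case analysis on the shape of $T_1$ itself (path, three leaves, four leaves) rather than on $T^*$, ending with two explicit linear combinations of the branch lengths $a_1,b_1,c_1,d_1,\ell_1$ that force both $\ell_1<\frac{1}{10}\diam_4(G)$ and $\ell_1>\frac{1}{10}\diam_4(G)$. Your instinct that the flaw in the original $k=4$ proof lies in an unjustified ``the crossing must use an edge of $T^*$''-type step is in the right spirit, but the actual error is a set of three specific inequalities relating $\|T_i''\|$ to branch lengths of $T_1$, refuted by a subdivided near-complete-bipartite example; identifying it requires engaging with those inequalities, which your plan does not reach.
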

In the same paper, proofs of the conjecture were provided for $k=3,4$.
The proof for
$k=4$, however, was incorrect.

We break this writing into several divisions. In Section~\ref{def}, we 
make necessary definitions and prove some preliminary lemmas 
required for our 
main results. In Section~\ref{lb}, we prove our main result:
\begin{thm}
\label{MAIN}
If $G$ is a connected graph and $k\geq 5$ is an integer, then 
$$
\diam_k(G)\leq \frac{k+3}{k+1}\rad_k(G). 
$$
\end{thm}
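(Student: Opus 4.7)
The plan is to use a center-swap averaging argument. Let $v \in C_k(G)$ satisfy $\e_k(v) = r := \rad_k(G)$, and let $S = \{u_1,\dots,u_k\}$ realize $\diam_k(G)$. If $v \in S$ then $d(S) \leq r$, so assume $v \notin S$. For each $i$, the set $S_i := \{v\} \cup (S \setminus \{u_i\})$ has cardinality $k$ and contains $v$, so $d(S_i) \leq r$. Fix a minimum Steiner tree $T_i$ realizing $d(S_i)$ and a shortest path $P_i$ in $G$ from $u_i$ to $V(T_i)$, with $|P_i| = d_G(u_i, V(T_i))$. Since $T_i \cup P_i$ is a connected subgraph spanning $S$,
\[
d(S) \leq |T_i| + |P_i| \leq r + |P_i| \qquad (i = 1,\dots,k),
\]
and in particular $d(S) \leq r + \min_i |P_i|$. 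Thus it suffices to exhibit some $i$ with $|P_i| \leq \tfrac{2r}{k+1}$; equivalently, it suffices to prove $\sum_{i=1}^k |P_i| \leq \tfrac{2k}{k+1}\,r$, since then $k\cdot d(S) \leq kr + \sum_i |P_i| \leq \tfrac{k(k+3)}{k+1}\,r$.

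The main obstacle is this sharp bound. Since each $|P_i| \leq d_G(u_i, v) \leq \e_2(v) \leq \e_k(v) = r$, the trivial estimate gives only $\sum_i |P_i| \leq kr$, which is too weak by roughly a factor of $(k+1)/2$. My strategy is to exploit the structure shared by $T_1,\dots,T_k$: each terminal $u_j$ with $j \neq i$ lies in $V(T_i)$, so the $k$ Steiner trees pairwise share large subtrees and their union $F := \bigcup_i T_i$ is a connected subgraph spanning $S \cup \{v\}$ with total edge-multiplicity at most $kr$. A careful double-counting argument on $F$---separating interior edges used by many $T_i$ from the pendant structure near each $u_j$, and bounding $|P_i|$ using distances within $T_j$ for $j \neq i$ (where $u_i$ already appears as a terminal)---should produce the required averaged bound.

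The hypothesis $k \geq 5$ is essential: the Henning--Oellermann--Swart construction realizes $\diam_k/\rad_k = \tfrac{2(k+1)}{2k-1}$, which strictly exceeds $\tfrac{k+3}{k+1}$ for $k \in \{3,4\}$, so any valid proof must genuinely use $k \geq 5$. I anticipate this entering in the pigeonhole step that forces at least one $|P_i|$ to be at most $\tfrac{2r}{k+1}$: once $k$ is large enough, the trees $T_i$ must overlap heavily in $F$, and the preliminary lemmas of Section~\ref{def} should provide the structural input---most likely a bound relating $d(S_i)$ to $d_G(u_i, V(T_i))$ or to the ``branching'' structure of the Steiner tree on $\{v\}\cup S$---needed for this overlap-to-gap transfer to go through.
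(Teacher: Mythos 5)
Your setup coincides with the paper's (Definition~\ref{defn}): swap the center vertex in for each diameter vertex to get $k$ sets $S_i$ with $d(S_i)\le r$, and note $d(S)\le \|T_i\|+|P_i|\le r+|P_i|$. The arithmetic reduction to ``some $|P_i|\le \tfrac{2r}{k+1}$'' is also correct. But that reduction carries no content: under the contradiction hypothesis $d(S)>\tfrac{k+3}{k+1}r$, the very inequality $d(S)\le r+|P_i|$ forces \emph{every} $|P_i|>\tfrac{2r}{k+1}$, so establishing that some $|P_i|\le\tfrac{2r}{k+1}$ is exactly equivalent to the theorem. Everything after ``The main obstacle is this sharp bound'' is a plan rather than an argument, and the plan as described does not obviously close: the paths $P_i$ realizing $d_G(u_i,V(T_i))$ need not lie in $F=\bigcup_i T_i$, and heavy overlap of the $T_i$ controls $\sum_i\|T_i\|$ but by itself gives no \emph{upper} bound on how far $u_i$ can be from $V(T_i)$. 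So there is a genuine gap, and it is located precisely at the theorem's core.

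The mechanism the paper uses is in a sense dual to what you propose. Rather than hunting for one small $|P_i|$, it works inside a single tree $T_1$ and proves that the pairwise $T_1$-distances among its terminals $v_0,v_2,\dots,v_k$ are all large: attaching the $v_i$--$v_0$ path of $T_1$ to $T_i$ spans $D$, giving $d_{T_1}(v_i,v_0)>\tfrac{2}{k+3}\diam_k(G)$, and attaching the $v_i$--$v_j$ path of $T_1$ to the trimmed tree $T_i'$ gives the stronger bound $d_{T_1}(v_i,v_j)>\tfrac{2}{k+3}\diam_k(G)+\ell_1$, where $\ell_1=\|T_1\|-\|T_1'\|$. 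A depth-first traversal of $T_1$ (or of the two subtrees obtained by splitting $T_1$ at the branch vertex nearest $v_0$, in the case where that vertex is not a terminal) covers each edge exactly twice and has length at least the sum of consecutive terminal distances; comparing this with $2\|T_1\|\le 2\rad_k(G)<\tfrac{2(k+1)}{k+3}\diam_k(G)$ yields the contradiction, with $k\ge 5$ entering when the count of $\ell_1$ terms is tallied. Some such lower-bound-plus-Euler-tour device (or an actual execution of your double count, which I do not see how to complete) is what your proposal is missing.
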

In Section~\ref{ub} we show that this bound is tight. 
In Section~\ref{ub4}, we provide a correct proof to confirm the 
conjecture in \cite{HEN90} for $k=4$ building on the portions proven
in \cite{HEN90}. Following this, in Section~\ref{cor},
we identify the error in the proof
of Conjecture~\ref{bad} for $k=4$ provided in \cite{HEN90}.
Finally, Section~\ref{complete} provides the correct arguments 
originally presented in \cite{HEN90} for the 
case $k=4$ to complete the proof. 
To summarize the results of this paper and related results,
Table~\ref{table} gives the maximum value of the
ratio $\diam_k(G)/\rad_k(G)$ for a connected graph $G$ as prescribed 
by \cite{HEN90} and Theorem~\ref{MAIN}. 

\begin{table}[!h]
\centering
\caption{Values of $\diam_k(G)/\rad_k(G)$ as found in \cite{HEN90} and this paper.}
\begin{tabular}{|c|c|c|}\hline
$k$ & ${\diam_k(G)}/{\rad_k(G)}$ & Reference\\\hline
$3$ & ${8}/{5}$ & \cite{HEN90}\\
$4$ & ${10}/{7}$ & \cite{HEN90} and Section~\ref{ub4}\\
$\geq 5$ & $(k+3)/(k+1)$ & Section~\ref{lb}\\\hline
\end{tabular}
\label{table}
\end{table}

\section{Definitions and preliminary lemmas}
\label{def}
Let $k\geq 2$ be a positive integer and suppose that $G$ is a connected graph of 
order at least $k$. Then there exists a set $D=\{v_1,v_2,\ldots,v_k\}\subset V(G)$ 
such that ${d(D)=\diam_k(G)}$. Similarly, there exists $v_0 \in V(G)$ satisfying 
$\e_k(v_0)=\rad_k(G)$.  We may now make the following definitions, which closely 
follow definitions made in \cite{HEN90}.

\begin{defn}
\label{defn}
Suppose that $G$ is a connected graph of order at least $k$. Assume that 
$D=\{v_1,v_2,\ldots,v_k\}$ with $d(D)=\diam_k(G)$ and $\e_k(v_0)=\rad_k(G)$.
For each $1\leq i\leq k$,
\begin{enumerate}
\item Define $D_i:=(D \setminus\{v_i\})\cup\{v_0\}$;

\item Define $T_i$ to be a Steiner tree for $D_i$;

\item Define $T_i'$ to be the smallest subtree of $T_i$ containing
$D_i\setminus \{v_0\}$;

\item Define $\ell_i:=\|T_i\|-\|T_i'\|$. Without loss of generality, we assume 
that $\ell_1\leq \ell_j$ for $j\geq 2$. 
\end{enumerate}
\end{defn}
Of course, if $v_0\in D$, we have that $\rad_k(G)=\diam_k(G)$. So if 
$\rad_k(G)<\diam_k(G)$ we must have $v_0\notin D$.
It is worth noting that $v_i$ is the only element of $D\cup \{v_0\}$ not 
necessarily contained in the tree $T_i$, while the tree $T_{i}'$ need not 
contain $v_0$. Figure~\ref{T1andT1'} illustrates the difference between the 
trees $T_1$ and $T_1'$ for $k=3$.

\begin{figure}[ht!]
\centering
%\SingleSpacing
\begin{tabular}{c|c}
\begin{tikzpicture}%[scale=0.75]%[every 
node/.style={draw=black,thick,circle,inner sep=0pt,fill=black}]
\newcommand\X{3};
\newcommand\Y{1};
\node[minimum size=1mm,inner sep=0pt] (v2) at (0,0) [circle, 
fill=black,label=below:$v_2$] {};

\node[minimum size=1mm,inner sep=0pt] (v0) at (0,2*\Y) [circle, 
fill=black,label=above:$v_0$] {};
\node[minimum size=1mm,inner sep=0pt] (s) at (1*\X,1*\Y) [circle, fill=black] 
{};
\node[minimum size=1mm,inner sep=0pt] (v3) at (2*\X,1*\Y) [circle, 
fill=black,label=right:$v_3$] {};
\node[minimum size=1mm,inner sep=0pt] (v1) at (1.5*\X,0*\Y) [circle, 
fill=black,label=below:$v_1$] {};
\draw  (s) edge (v3) (s) edge (v0);
\draw[decorate, decoration={brace}] ([xshift = 1ex,yshift=0ex]v0.center) -- 
([xshift = -.5ex,yshift=.5ex]s.center) node[midway, above right] {$\ell_1$};
\draw (v2) edge (s);
\node[minimum size=1mm,inner sep=0pt] (a) at (1.5*\X,2*\Y) {The tree $T_1$};
\end{tikzpicture}
&
\begin{tikzpicture}%[scale=0.75]%[every 
node/.style={draw=black,thick,circle,inner sep=0pt,fill=black}]
\newcommand\X{3};
\newcommand\Y{1};
\node[minimum size=1mm,inner sep=0pt] (v2) at (0,0) [circle, 
fill=black,label=below:$v_2$] {};

\node[minimum size=1mm,inner sep=0pt] (v0) at (0,2*\Y) [circle, 
fill=black,label=above:$v_0$] {};
\node[minimum size=1mm,inner sep=0pt] (s) at (1*\X,1*\Y) [circle, fill=black] 
{};
\node[minimum size=1mm,inner sep=0pt] (v3) at (2*\X,1*\Y) [circle, 
fill=black,label=right:$v_3$] {};
\node[minimum size=1mm,inner sep=0pt] (v1) at (1.5*\X,0*\Y) [circle, 
fill=black,label=below:$v_1$] {};
\draw  (s) edge (v3);
\draw (v2) edge (s);
\node[minimum size=1mm,inner sep=0pt] (a) at (1.5*\X,2*\Y) {The tree $T_1'$};
\end{tikzpicture}
\end{tabular}
\caption{Possible trees $T_1$ and $T_1'$ for $k=3$. Vertices of degree 2 are 
not drawn.}
\label{T1andT1'}
\end{figure}
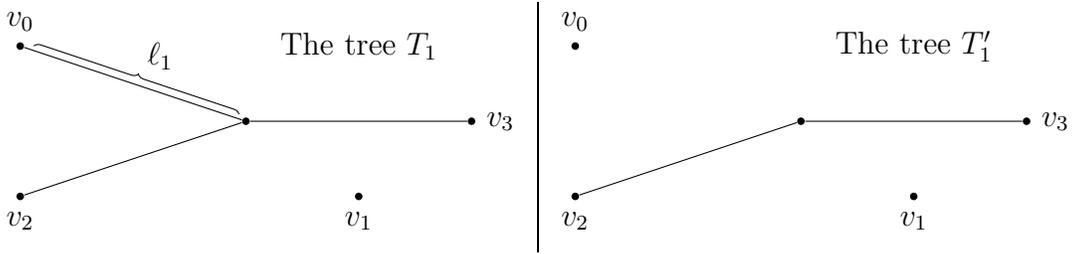

Note that $\ell_i=d_{T_i}(v_0,T_i')$. From Definition~\ref{defn}, we make the following observation.

\begin{obs}
\label{kobs}
Suppose that $k\geq 2$ is an integer and that $G$ is a connected graph with at 
least $k$ vertices. Let $\ell_i$, $T_i$, and $T_i'$ be defined as in Definition~\ref{defn}.
If $\diam_k(G)>p\rad_k(G)$ for some $p>0$,  then for each $1\leq 
i\leq k$, we have the following:
\begin{enumerate}
\item $\displaystyle \|T_i\| \leq \rad_k(G)<\frac{1}{p}\diam_k(G)$, and
\item $\displaystyle \|T_i'\|= \|T_i\| -\ell_i < \frac{1}{p}\diam_k(G)-\ell_1$.
\end{enumerate}
\end{obs}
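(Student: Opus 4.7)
The plan is to unfold the definitions carefully and observe that everything follows from the defining properties of the Steiner $k$-radius together with the ordering assumption on the $\ell_i$'s imposed in Definition~\ref{defn}.

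For part (1), I would first note that by construction $T_i$ is a Steiner tree of the set $D_i$, so $\|T_i\|=d(D_i)$. Since $D_i$ has size exactly $k$ and contains $v_0$, the definition of Steiner $k$-eccentricity gives $d(D_i)\leq \e_k(v_0)$, and by choice of $v_0$ we have $\e_k(v_0)=\rad_k(G)$. Chaining these yields $\|T_i\|\leq \rad_k(G)$. The second inequality is then immediate from the hypothesis $\diam_k(G)>p\rad_k(G)$, rewritten as $\rad_k(G)<\frac{1}{p}\diam_k(G)$.

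For part (2), the equality $\|T_i'\|=\|T_i\|-\ell_i$ is just a restatement of the definition $\ell_i:=\|T_i\|-\|T_i'\|$. The inequality then follows by applying two bounds in succession: the ordering $\ell_1\leq \ell_i$ stipulated in Definition~\ref{defn}(4) gives $\|T_i\|-\ell_i\leq \|T_i\|-\ell_1$, and then part (1) bounds $\|T_i\|$ above by $\frac{1}{p}\diam_k(G)$, yielding the desired strict inequality.

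There is no real obstacle here; the only thing to be careful about is ensuring that the set $D_i$ truly has size $k$ (which requires $v_0\notin D$, a case already dispatched by the preceding remark that otherwise $\rad_k(G)=\diam_k(G)$ and the hypothesis $\diam_k(G)>p\rad_k(G)$ forces $p<1$, making the bounds trivial). Otherwise the observation is essentially a bookkeeping statement collecting the consequences of Definition~\ref{defn} in a form convenient for the later arguments.
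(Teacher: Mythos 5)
Your proposal is correct and is exactly the definition-unfolding the paper intends: the paper states this as an Observation with no written proof, and your chain $\|T_i\|=d(D_i)\leq \e_k(v_0)=\rad_k(G)<\frac{1}{p}\diam_k(G)$ together with $\ell_1\leq\ell_i$ is the implicit argument. Your side remark about $|D_i|$ is a reasonable bit of care, though even when $v_0\in D$ the bound survives since Steiner distance is monotone under adding vertices to $S$.
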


\begin{comment}
\begin{obs}
\label{kobs}
Suppose that $k\geq 2$ is an integer and that $G$ is a connected graph with at 
least $k$ vertices. Let $\ell_i$, $T_i$, and $T_i'$ be defined as in
Definition~\ref{defn}. If $\diam_k(G)>\frac{2(k+1)}{2k-1}\rad_k(G)$,  then for each $1\leq 
i\leq k$, 
\begin{enumerate}
\item $\displaystyle \|T_i\| \leq \rad_k(G)<\frac{2k-1}{2(k+1)}\diam_k(G)$, and
\item $\displaystyle \|T_i'\|= \|T_i\| -\ell_i < 
\frac{2k-1}{2(k+1)}\diam_k(G)-\ell_i$.
\end{enumerate}
\end{obs}
\end{comment}

With Observation~\ref{kobs} in mind, we now prove our first lemma.

\begin{lem}
\label{distlem}
Suppose that $G$ is a connected graph of order $n\geq k$. Let $\ell_i$, 
$T_i$, and $T_i'$ be defined as in Definition~\ref{defn}. If 
$\diam_k(G)>p\cdot \rad_k(G)$ with $p>1$, then for $1< i , j \leq k$ with $i\neq j$, 
the following hold:
\begin{enumerate}
\item $\displaystyle d_{T_1}(v_i,v_0)>\frac{p-1}{p}\diam_k(G),$ and

\item $\displaystyle d_{T_1}(v_i,v_j)>\frac{p-1}{p}\diam_k(G)+\ell_1.$
\end{enumerate} 
\end{lem}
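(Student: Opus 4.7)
My plan is to prove both parts by constructing, for each case, a connected subgraph $H$ of $G$ that contains $D$, then invoking $\diam_k(G) = d(D) \leq \|H\|$ together with Observation~\ref{kobs}(1), namely $\|T_i\| \leq \rad_k(G) < \frac{1}{p}\diam_k(G)$. Rearranging will immediately produce a lower bound of the form $d_{T_1}(v_i, v_0) \geq \diam_k(G) - \rad_k(G) > \frac{p-1}{p}\diam_k(G)$, possibly with an additive $\ell_1$ that comes from switching $T_j$ for $T_j'$.

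For part (1), I would fix $i\geq 2$, let $P$ denote the (unique) $v_0$--$v_i$ path in $T_1$, and take $H := T_i \cup P$. Since $T_i$ is a Steiner tree for $D_i = (D\setminus\{v_i\}) \cup \{v_0\}$, it contains every vertex of $D$ except $v_i$ as well as $v_0$; the path $P$ is attached to $T_i$ at $v_0$ and picks up $v_i$, so $H$ is a connected subgraph of $G$ containing $D$. Counting edges, $\|H\| \leq \|T_i\| + |P| \leq \rad_k(G) + d_{T_1}(v_i, v_0)$, and rearranging $\diam_k(G) \leq \rad_k(G) + d_{T_1}(v_i, v_0)$ using Observation~\ref{kobs}(1) gives (1).

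For part (2), fix distinct $i, j \in \{2,\dots,k\}$, and let $Q$ be the $v_i$--$v_j$ path in $T_1$ (equivalently, in $T_1'$, since $T_1'$ is the minimal subtree of $T_1$ spanning $\{v_2,\dots,v_k\}$). This time I take $H := T_j' \cup Q$. The tree $T_j'$ contains $D_j \setminus \{v_0\} = D \setminus \{v_j\}$, which includes $v_i$, and $Q$ shares $v_i$ with $T_j'$ and has $v_j$ as its other endpoint, so $H$ is connected and contains $D$. Its size satisfies
\[
\|H\| \leq \|T_j'\| + |Q| = (\|T_j\| - \ell_j) + d_{T_1}(v_i, v_j) \leq \rad_k(G) - \ell_1 + d_{T_1}(v_i, v_j),
\]
where the last step uses $\|T_j\| \leq \rad_k(G)$ together with the minimality assumption $\ell_1 \leq \ell_j$ from Definition~\ref{defn}(4). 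Rearranging $\diam_k(G) \leq \|H\|$ and applying Observation~\ref{kobs}(1) once more yields $d_{T_1}(v_i, v_j) > \frac{p-1}{p}\diam_k(G) + \ell_1$.

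I do not expect serious obstacles; the only subtle points are the easy-to-verify facts that $Q$ lies in $T_1'$ (so that $d_{T_1}(v_i, v_j)$ is the length of the $v_i$--$v_j$ path one actually sees in $T_1'$) and, more importantly, that the additive $\ell_1$ in (2) arises precisely from trading the Steiner tree $T_j$ for its trimmed subtree $T_j'$ while exploiting the ordering $\ell_1 \leq \ell_j$. This is why only (2) carries the extra $\ell_1$, whereas (1) does not need to discard the $v_0$-branch from $T_i$.
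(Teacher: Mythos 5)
Your proposal is correct and follows essentially the same argument as the paper: part (1) adjoins $T_i$ to the $v_0$--$v_i$ path of $T_1$, and part (2) adjoins a trimmed tree to the $v_i$--$v_j$ path, with the extra $\ell_1$ coming from $\|T'\|=\|T\|-\ell$ and the ordering $\ell_1\leq\ell_j$. The only cosmetic difference is that in part (2) you use $T_j'$ where the paper uses $T_i'$, which is just a relabeling since the statement is symmetric in $i$ and $j$.
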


\begin{proof}
For the first inequality, note that adjoining the tree $T_i$ with the path in 
$T_1$ between $v_i$ and $v_0$ generates a connected subgraph of $G$ spanning 
$D$. Hence, 
$$
\|T_i\|+d_{T_1}(v_i,v_0)\geq \diam_k(G),
$$
which implies that 
$$
d_{T_1}(v_i,v_0)\geq \diam_k(G)-\|T_i\|.
$$
In view of Observation~\ref{kobs}, we see that 
\begin{align*}
d_{T_1}(v_i,v_0)
&> \diam_k(G)-\left( \frac{1}{p}\diam_k(G) \right)\\
&=\frac{p-1}{p}\diam_k(G). 
\end{align*}

For the second inequality, we similarly note that adjoining the tree $T_i'$ with the path in 
$T_1$ between $v_i$ and $v_j$ generates a connected subgraph of $G$ spanning 
$D$. Hence, 
$$
\|T_i'\|+d_{T_1}(v_i,v_j)\geq \diam_k(G),
$$
which implies that 
$$
d_{T_1}(v_i,v_j)\geq \diam_k(G)-\|T_i'\|.
$$
Applying Observation~\ref{kobs} a second time, we have that
\begin{align*}
d_{T_1}(v_i,v_j)
&>\diam_4(G)-\left(  \frac{1}{p}\diam_k(G)-\ell_i \right)\\
&= \frac{p-1}{p}\diam_k(G)+\ell_i\\
&\geq  \frac{p-1}{p}\diam_k(G)+\ell_1.
\end{align*}
\label{obsdist}
\end{proof}

With Lemma~\ref{distlem} in hand, we derive the following corollary. 

\begin{cor}
\label{cor}
Using the definitions and notations provided in Definition~\ref{defn}, if 
$1<i\neq j\leq k$ and
$$
\diam_k(G)>\frac{10}{7}\rad_k(G),
$$
then
\begin{enumerate}
\item $d_{T_1}(v_i,v_0)>\frac{3}{10}\diam_k(G)$, and

\item $d_{T_1}(v_i,v_j)>\frac{3}{10}\diam_k(G)+\ell_1$.
\end{enumerate}
Furthermore, if $1<i\neq j\leq k$ and 
$$
\diam_k(G)>\frac{k+3}{k+1}\rad_k(G),
$$
then 
\begin{enumerate}
\item $d_{T_1}(v_i,v_0)>\frac{2}{k+3}\diam_k(G)$, and

\item $d_{T_1}(v_i,v_j)>\frac{2}{k+3}\diam_k(G)+\ell_1$.
\end{enumerate}
\label{distlemobs}
\end{cor}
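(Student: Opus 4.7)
The plan is to derive this corollary as a direct specialization of Lemma~\ref{distlem}. Both parts of the corollary have the exact shape of the conclusions of that lemma, so the only work is to compute $(p-1)/p$ for the two hypothesized values of $p$ and observe that the hypotheses of Lemma~\ref{distlem} (namely that $p > 1$) are satisfied in each case.

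For the first pair of inequalities, I would set $p = 10/7$ and check that
\[
\frac{p-1}{p} = \frac{10/7 - 1}{10/7} = \frac{3/7}{10/7} = \frac{3}{10}.
\]
Since $10/7 > 1$, Lemma~\ref{distlem} applies and immediately yields $d_{T_1}(v_i,v_0) > \tfrac{3}{10}\diam_k(G)$ and $d_{T_1}(v_i,v_j) > \tfrac{3}{10}\diam_k(G) + \ell_1$.

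For the second pair of inequalities, I would set $p = (k+3)/(k+1)$ and compute
\[
\frac{p-1}{p} = \frac{(k+3)/(k+1) - 1}{(k+3)/(k+1)} = \frac{2/(k+1)}{(k+3)/(k+1)} = \frac{2}{k+3}.
\]
Since $k \geq 2$ gives $(k+3)/(k+1) > 1$, Lemma~\ref{distlem} again applies and yields the stated bounds $d_{T_1}(v_i,v_0) > \tfrac{2}{k+3}\diam_k(G)$ and $d_{T_1}(v_i,v_j) > \tfrac{2}{k+3}\diam_k(G) + \ell_1$.

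There is essentially no obstacle: the entire content is the numerical simplification of $(p-1)/p$ for two particular values of $p$, together with the trivial verification that $p > 1$ in both cases. The proof is a one- or two-line invocation of Lemma~\ref{distlem} for each case.
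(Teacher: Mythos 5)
Your proposal is correct and matches the paper's (implicit) derivation exactly: the corollary is obtained by specializing Lemma~\ref{distlem} to $p=10/7$ and $p=(k+3)/(k+1)$, and your computations $(p-1)/p = 3/10$ and $(p-1)/p = 2/(k+3)$ are exactly what is needed.
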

With these definitions and results in hand, we are prepared to prove our main 
result.

\section{Proof of Theorem \ref{MAIN}}\label{lb}

\begin{proof}
Suppose towards a contradiction that there exists a connected 
graph $G$ such that 
$$
\diam_k(G) > \frac{k+3}{k+1}\rad_k(G).
$$
This implies that 
\begin{equation}
\label{krad}
\rad_k(G)<\frac{k+1}{k+3}\diam_k(G). 
\end{equation}
Suppose $D=\{v_1,v_2,\ldots,v_k\}$ is a set of $k$ vertices such that 
$d(D)=\diam_k(G)$. Let $v_0\in C_k(G)$. For $1\leq i\leq k$, define $D_i$, 
$T_i$, $T_i'$, and $\ell_i$ as in Definition~\ref{defn}. Again, we assume that 
$\ell_{1}\leq \ell_j$ for $j\geq 2$. We have that $\|T_1\|\leq \rad_k(G)$. Let 
$x$ be the vertex in $T_1'$, which is closest to $v_0$ in 
$T_1$. It is possible that $x=v_0$. We now root $T_1$ at $v_0$ and 
consider the following two cases. 

\noindent \textbf{Case 1:} $x\in D_i\setminus\{v_0\}=\{v_2,\ldots, v_k\}$.

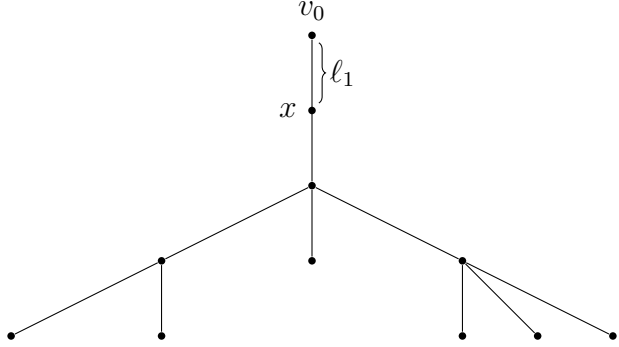
\begin{figure}[ht!]
\centering
\begin{tabular}{c}
\begin{tikzpicture}%[every node/.style={draw=black,thick,circle,inner sep=0pt,fill=black}]
\newcommand\X{2};
\newcommand\Y{1};
\node[minimum size=1mm,inner sep=0pt] (r) at (0,1*\Y) [circle, 
fill=black,label=above:$v_0$] {};
\node[minimum size=1mm,inner sep=0pt] (s) at (0*\X,-1*\Y) [circle, 
fill=black] {};
\node[minimum size=1mm,inner sep=0pt] (a) at (0*\X,0*\Y) [circle, 
fill=black,label=left:$x$] {};
\node[minimum size=1mm,inner sep=0pt] (i1) at (-1*\X,-2*\Y) [circle, 
fill=black,] {};
\node[minimum size=1mm,inner sep=0pt] (i2) at (1*\X,-2*\Y) [circle, fill=black,] 
{};
\node[minimum size=1mm,inner sep=0pt] (u1) at (-2*\X,-3*\Y) [circle, 
fill=black,] {};
\node[minimum size=1mm,inner sep=0pt] (u2) at (-1*\X,-3*\Y) [circle, fill=black] 
{};
\node[minimum size=1mm,inner sep=0pt] (u3) at (1*\X,-3*\Y) [circle, fill=black,] 
{};
\node[minimum size=1mm,inner sep=0pt] (u4) at (2*\X,-3*\Y) [circle, fill=black] 
{};
\node[minimum size=1mm,inner sep=0pt] (r1) at (0*\X,-2*\Y) [circle, fill=black] 
{};
\node[minimum size=1mm,inner sep=0pt] (r2) at (1.5*\X,-3*\Y) [circle, 
fill=black] {};

\draw[decorate, decoration={brace}, yshift=0ex,xshift=.5ex]  (0*\X,1*\Y-.1*\Y) 
-- node[right] {$\ell_1$}  (0*\X,0*\Y+.1*\Y);

\draw (r) -- (s) (s) -- (i1)  (i1)--(u1) (i1)--(u2) (s) -- (i2)   (i2) -- (u3) 
(i2) -- (u4) (s) -- (r1) (r2)--(i2);
%\draw[decorate, decoration={brace}, yshift=2in,xshift=-.5ex]  (u1) -- node[above left] {$\ell_1$} (s);
\end{tikzpicture}
\end{tabular}
\caption{A possible picture of the tree $T_1$ in case 1. Unnamed vertices of degree 2 are not drawn.}
\label{case1}
\end{figure}

Since $x\in D_i$, we have that $d_{T_1}(v_0,x)=d_{T_1}(v_0,v_i)$ for some $2\leq 
i\leq k$. 
Then, by Corollary~\ref{distlemobs}, we have $\ell_1>\frac{2}{k+3}\diam_k(G)$. 
Traversing $T_1$ via a 
depth first search and returning to $v_0$ induces a new labeling of the elements of 
$D_1$ in the following way: Let $u_1,u_2,\ldots,u_{k-1}$ be a relabeling of the
vertices $v_2,\ldots,v_k$ in the order in which these vertices are visited first
in the depth first search. By Corollary~\ref{distlemobs}, we have that
$d_{T_1}(v_0,u_1)>\frac{2}{k+3}\diam_k(G)$ and 
$d_{T_1}(v_0,u_{k-1})>\frac{2}{k+3}\diam_k(G)$. Furthermore, 
Corollary~\ref{distlemobs} asserts that
$d_{T_1}(u_i,u_j)>\frac{2}{k+3}\diam_k(G)+\ell_1$. Since 
$\ell_1>\frac{2}{k+3}\diam_k(G)$, the length of this traversal is greater than 
\begin{align*}
&2\cdot \frac{2}{k+3}\diam_k(G)+(k-2)\left(\frac{2}{k+3}+\ell_1\right)\\
&>2\cdot \frac{2}{k+3}\diam_k(G)+(k-2)\left(\frac{2}{k+3}+\frac{2}{k+3}\right)\\
&=\frac{4(k-1)}{k+3}\diam_k(G).
\end{align*}
This traversal also visits each edge of $T_1$ exactly twice, which implies that 
$$
2\rad_k(G)\geq 2\|T_1\|>\frac{4(k-1)}{k+3}\diam_k(G).
$$
Since $k\geq 5$, we have contradicted equation \eqref{krad}. 

\textbf{Case 2:} $x\notin D_i \setminus \{v_0\}$. 

Since $x\notin D_i \setminus \{v_0\}$, we have 
that $x$ has at least 2 children. Pick a child of $x$, say $c$. 
Let $H_1$ be the tree induced by vertices of the $v_0c$ path and descendants of $c$, and let $H_2$ be the tree
obtained from $T_1$ by removing $c$ and its descendants.
Figure~\ref{HTREES} illustrates the 
differences between $T_1,$ $H_1$, and $H_2$. 

\begin{figure}[ht!]
\centering
%\SingleSpacing
\begin{tabular}{ccc}
The Tree $T_1$ & The Tree $H_1$ & The Tree $H_2$\\
\begin{tikzpicture}%[every node/.style={draw=black,thick,circle,inner sep=0pt,fill=black}]
\newcommand\X{1};
\newcommand\Y{1};
\node[minimum size=1mm,inner sep=0pt] (r) at (0,0) [circle, 
fill=black,label=above:$v_0$] {};
\node[minimum size=1mm,inner sep=0pt] (s) at (0*\X,-1*\Y) [circle, 
fill=black,label=left:$x$] {};
\node[minimum size=1mm,inner sep=0pt] (i1) at (-1*\X,-2*\Y) [circle, 
fill=black,label=above left:$c$] {};
\node[minimum size=1mm,inner sep=0pt] (i2) at (1*\X,-2*\Y) [circle, fill=black,] 
{};
\node[minimum size=1mm,inner sep=0pt] (u1) at (-2*\X,-3*\Y) [circle, 
fill=black,] {};
\node[minimum size=1mm,inner sep=0pt] (u2) at (-1*\X,-3*\Y) [circle, fill=black] 
{};
\node[minimum size=1mm,inner sep=0pt] (u3) at (1*\X,-3*\Y) [circle, fill=black,] 
{};
\node[minimum size=1mm,inner sep=0pt] (u4) at (2*\X,-3*\Y) [circle, fill=black] 
{};
\node[minimum size=1mm,inner sep=0pt] (r1) at (0*\X,-2*\Y) [circle, fill=black] 
{};
\node[minimum size=1mm,inner sep=0pt] (r2) at (1.5*\X,-3*\Y) [circle, 
fill=black] {};

\draw[decorate, decoration={brace}, yshift=0ex,xshift=.5ex]  (0*\X,0*\Y-.1*\Y) 
-- node[right] {$\ell_1$}  (0*\X,-1*\Y+.1*\Y);

\draw (r) -- (s) (s) -- (i1)  (i1)--(u1) (i1)--(u2) (s) -- (i2)   (i2) -- (u3) 
(i2) -- (u4) (s) -- (r1) (r2)--(i2);
%\draw[decorate, decoration={brace}, yshift=2in,xshift=-.5ex]  (u1) -- node[above left] {$\ell_1$} (s);
\end{tikzpicture}
\hspace{.3in}
&
\hspace{.3in}
\begin{tikzpicture}%[every node/.style={draw=black,thick,circle,inner sep=0pt,fill=black}]
\newcommand\X{1};
\newcommand\Y{1};
\node[minimum size=1mm,inner sep=0pt] (r) at (0,0) [circle, 
fill=black,label=above:$v_0$] {};
\node[minimum size=1mm,inner sep=0pt] (s) at (0*\X,-1*\Y) [circle, 
fill=black,label=left:$x$] {};
\node[minimum size=1mm,inner sep=0pt] (i1) at (-1*\X,-2*\Y) [circle, 
fill=black,label=above left:$c$] {};
\node[minimum size=1mm,inner sep=0pt] (u1) at (-2*\X,-3*\Y) [circle, fill=black] 
{};
\node[minimum size=1mm,inner sep=0pt] (u2) at (-1*\X,-3*\Y) [circle, fill=black] 
{};

\draw[decorate, decoration={brace}, yshift=0ex,xshift=.5ex]  (0*\X,0*\Y-.1*\Y) 
-- node[right] {$\ell_1$}  (0*\X,-1*\Y+.1*\Y);

\draw (r) -- (s) (s) -- (i1)  (i1)--(u1) (i1)--(u2);
%\draw[decorate, decoration={brace}, yshift=2in,xshift=-.5ex]  (u1) -- node[above left] {$\ell_1$} (s);
\end{tikzpicture}
\hspace{.3in}
&
\hspace{.3in}
\begin{tikzpicture}%[every node/.style={draw=black,thick,circle,inner sep=0pt,fill=black}]
\newcommand\X{1};
\newcommand\Y{1};
\node[minimum size=1mm,inner sep=0pt] (r) at (0,0) [circle, 
fill=black,label=above:$v_0$] {};
\node[minimum size=1mm,inner sep=0pt] (s) at (0*\X,-1*\Y) [circle, 
fill=black,label=left:$x$] {};
\node[minimum size=1mm,inner sep=0pt] (i2) at (1*\X,-2*\Y) [circle, fill=black,] 
{};
\node[minimum size=1mm,inner sep=0pt] (u3) at (1*\X,-3*\Y) [circle, fill=black,] 
{};
\node[minimum size=1mm,inner sep=0pt] (u4) at (2*\X,-3*\Y) [circle, fill=black] 
{};
\node[minimum size=1mm,inner sep=0pt] (r1) at (0*\X,-2*\Y) [circle, fill=black] 
{};
\node[minimum size=1mm,inner sep=0pt] (r2) at (1.5*\X,-3*\Y) [circle, 
fill=black] {};

\draw[decorate, decoration={brace}, yshift=0ex,xshift=.5ex]  (0*\X,0*\Y-.1*\Y) 
-- node[right] {$\ell_1$}  (0*\X,-1*\Y+.1*\Y);

\draw (r) -- (s) (s) -- (i2)   (i2) -- (u3) (i2) -- (u4) (s) -- (r1) (r2)--(i2);
%\draw[decorate, decoration={brace}, yshift=2in,xshift=-.5ex]  (u1) -- node[above left] {$\ell_1$} (s);
\end{tikzpicture}
\end{tabular}
\caption{A possible picture of $T_1$, $H_1$, and $H_2$ as in case 2. Unnamed vertices of 
degree 2 are not drawn.}
\label{HTREES}
\end{figure}
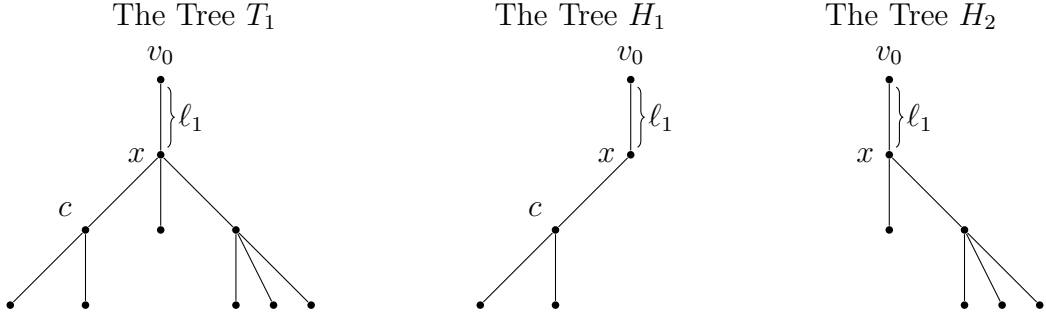

Both $H_1$ and $H_2$ contain elements of $D_i$. We observe that $E(H_1)\cup 
E(H_2)=E(T_1)$ while the intersection of $E(H_1)$ and $E(H_2)$ is
the path in $T_1$ 
between $v_0$ and $x$. Hence,
\begin{equation}
\label{heqn}
\|H_1\|+\|H_2\|-\ell_1=\|T_1\|<\frac{k+1}{k+3}\diam_k(G).
\end{equation}
It is easy to see that $|V(H_1)\cap D_1|+|V(H_2)\cap D_1|=k+1$ since $v_0$
(and only $v_0$) is 
included in both subtrees. As in the previous case, we root $H_1$ and $H_2$ at 
$v_0$ and perform a depth first search traversal of each subtree. By the same 
reasoning as the previous case, we see that 
$$
2\|H_1\| > |V(H_1)\cap D_1|\cdot \frac{2}{k+3}\diam_k(G) + (|V(H_1)\cap 
D_1|-2)\ell_1,
$$
and
$$
2\|H_2\| > |V(H_2)\cap D_1|\cdot \frac{2}{k+3}\diam_k(G) + (|V(H_2)\cap 
D_1|-2)\ell_1.
$$
Combining these sums together, we see that 
$$
2\|H_1\| + 2\|H_2\|> (k+1)\cdot \frac{2}{k+3}\diam_k(G) + (k+1-4)\ell_1.
$$
Since $k\geq 5$, we have that 
$$
2\|H_1\| + 2\|H_2\|> \frac{2(k+1)}{k+3}\diam_k(G) + 2\ell_1.
$$
Hence,
\begin{align*}
\|H_1\| + \|H_2\|-\ell_1&>\frac{k+1}{k+3}\diam_k(G),
\end{align*}
which contradicts equation \eqref{heqn}.
So no such connected graph $G$ exists. 
\end{proof}

\section{Sharpness of Theorem \ref{MAIN}}\label{ub}

We now prove that this bound in Theorem~\ref{MAIN} is tight via a construction. 
Let $k\geq 5$ be an integer. We now outline the construction of a graph $G_k$ 
satisfying 
$$
\diam_k(G_k)=\frac{k+3}{k+1}\rad_k(G_k).
$$
Begin with a set of $k$ 
independent vertices, $D=\{d_1,d_2,\ldots,d_k\}$. Let $m=\lceil 
\frac{k+1}{2}\rceil$. Define $D_1=\{d_1,d_2,\ldots,d_{m}\}$ and 
$D_2=\{d_{m},d_{m+1},\ldots,d_k\}$. For each vertex $d_i\in D_1$, adjoin to each 
vertex in $D_1\setminus \{d_i\}$ a new vertex $a_i$. Let $A$ be the set these 
new vertices all such vertices. Similarly, for each vertex $d_j\in D_2$ define a 
new vertex $b_j$ to be a vertex with $N(b_u)=D_2\setminus\{d_j\}$. Let $B$ be 
the set of all such vertices. 
Finally, adjoin a new vertex $r$ to each vertex in $A\cup B$. This completes the 
construction of $G_k$. Figure~\ref{G5} and Figure~\ref{G6} illustrate the
graphs of $G_5$ and $G_6$, respectively. 

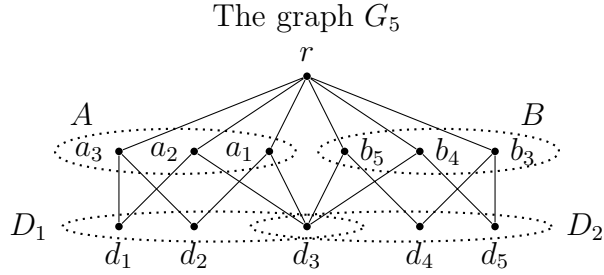
\begin{figure}[ht!]
\centering
%\SingleSpacing
\begin{tabular}{c}
The graph $G_5$\\
\begin{tikzpicture}[,node distance=30mm]
\newcommand\X{1};
\newcommand\Y{1};
\node[minimum size=1mm,inner sep=0pt] (r) at (0,0) [circle, 
fill=black,label=above:$r$] {};

\node[minimum size=1mm,inner sep=0pt] (a1) at (-2.5*\X,-1*\Y) [circle, 
fill=black,label=left:$a_3$] {};
\node[minimum size=1mm,inner sep=0pt] (a2) at (-1.5*\X,-1*\Y) [circle, 
fill=black,label=left:$a_2$] {};
\node[minimum size=1mm,inner sep=0pt] (a3) at (-.5*\X,-1*\Y) [circle, 
fill=black,label=left:$a_1$] {};

\node[minimum size=1mm,inner sep=0pt] (b4) at (.5*\X,-1*\Y) [circle, 
fill=black,label=right:$b_5$] {};
\node[minimum size=1mm,inner sep=0pt] (b5) at (1.5*\X,-1*\Y) [circle, 
fill=black,label=right:$b_4$] {};
\node[minimum size=1mm,inner sep=0pt] (b6) at (2.5*\X,-1*\Y) [circle, 
fill=black,label=right:$b_3$] {};

\node[minimum size=1mm,inner sep=0pt] (d1) at (-2.5*\X,-2*\Y) [circle, 
fill=black,label=below:$d_1$] {};
\node[minimum size=1mm,inner sep=0pt] (d2) at (-1.5*\X,-2*\Y) [circle, 
fill=black,label=below:$d_2$] {};
\node[minimum size=1mm,inner sep=0pt] (d3) at (0*\X,-2*\Y) [circle, 
fill=black,label=below:$d_3$] {};
\node[minimum size=1mm,inner sep=0pt] (d4) at (1.5*\X,-2*\Y) [circle, 
fill=black,label=below:$d_4$] {};
\node[minimum size=1mm,inner sep=0pt] (d5) at (2.5*\X,-2*\Y) [circle, 
fill=black,label=below:$d_5$] {};

\draw (r)--(a1) (r)--(a2) (r)--(a3) (r)--(b5) (r)--(b6) (r)--(b4);

\draw (a1)--(d1) (a1)--(d2);
\draw (a2)--(d1) (a2)--(d3);   
\draw (a3)--(d2) (a3)--(d3);

\draw (d3)--(b4) (d4)--(b4) (d3)--(b5) (d5)--(b5) (d4)--(b6) (d5)--(b6);

\draw[dotted,thick] (-1.75*\X,-\Y) ellipse (16mm and 3mm);
\draw[dotted,thick] (1.75*\X,-\Y) ellipse (16mm and 3mm);

\draw[dotted,thick] (-1.25*\X,-2*\Y) ellipse (20mm and 2mm);
\draw[dotted,thick] (1.25*\X,-2*\Y) ellipse (20mm and 2mm);

\node (q) at (-3.*\X,-.5*\Y) {$A$};
\node (q) at (3.*\X,-.5*\Y) {$B$};

\node (q) at (-3.7*\X,-2*\Y) {$D_1$};
\node (q) at (3.7*\X,-2*\Y) {$D_2$};
\end{tikzpicture}
\end{tabular}
\caption{The graph $G_5$. All vertices are drawn.}
\label{G5}
\end{figure}

\begin{figure}[ht!]
\centering
%\SingleSpacing
\begin{tabular}{c}
The graph $G_6$\\

\begin{tikzpicture}[,node distance=30mm]
\newcommand\X{1};
\newcommand\Y{1};
\node[minimum size=1mm,inner sep=0pt] (r) at (0,0) [circle, 
fill=black,label=above:$r$] {};

\node[minimum size=1mm,inner sep=0pt] (a1) at (-3*\X,-1*\Y) [circle, 
fill=black,label=left:$a_4$] {};
\node[minimum size=1mm,inner sep=0pt] (a2) at (-2*\X,-1*\Y) [circle, 
fill=black,fill=black,label=left:$a_3$]  {};
\node[minimum size=1mm,inner sep=0pt] (a3) at (-1*\X,-1*\Y) [circle, 
fill=black,label=left:$a_2$] {};
\node[minimum size=1mm,inner sep=0pt] (a4) at (0*\X,-1*\Y) [circle,
fill=black,label=left:$a_1$] 
{};

\node[minimum size=1mm,inner sep=0pt] (b5) at (1*\X,-1*\Y) [circle,
fill=black,label=right:$b_6$] 
{};
\node[minimum size=1mm,inner sep=0pt] (b6) at (2*\X,-1*\Y) [circle,
fill=black,label=right:$b_5$] 
{};
\node[minimum size=1mm,inner sep=0pt] (b7) at (3*\X,-1*\Y) [circle,
fill=black,label=right:$b_4$] 
{};

\node[minimum size=1mm,inner sep=0pt] (d1) at (-3*\X,-2*\Y) [circle, 
fill=black,label=below:$d_1$] {};
\node[minimum size=1mm,inner sep=0pt] (d2) at (-2*\X,-2*\Y) [circle, 
fill=black,label=below:$d_2$] {};
\node[minimum size=1mm,inner sep=0pt] (d3) at (-1*\X,-2*\Y) [circle, 
fill=black,label=below:$d_3$] {};
\node[minimum size=1mm,inner sep=0pt] (d4) at (0.5*\X,-2*\Y) [circle, 
fill=black,label=below:$d_4$] {};
\node[minimum size=1mm,inner sep=0pt] (d5) at (2*\X,-2*\Y) [circle, 
fill=black,label=below:$d_5$] {};
\node[minimum size=1mm,inner sep=0pt] (d6) at (3*\X,-2*\Y) [circle, 
fill=black,label=below:$d_6$] {};

\draw (r)--(a1) (r)--(a2) (r)--(a3) (r)--(a4) (r)--(b5) (r)--(b6) (r)--(b7);

\draw (a1)--(d1) (a1)--(d2) (a1)--(d3);
\draw (a2)--(d1) (a2)--(d2) (a2)--(d4);   
\draw (a3)--(d1) (a3)--(d4) (a3)--(d3);
\draw (a4)--(d4) (a4)--(d2) (a4)--(d3);

\draw (d4)--(b5) (d5)--(b5) (d4)--(b6) (d6)--(b6) (d5)--(b7) (d6)--(b7);

\draw[dotted,thick] (-1.75*\X,-\Y) ellipse (21mm and 3mm);
\draw[dotted,thick] (2.25*\X,-\Y) ellipse (16mm and 3mm);

\draw[dotted,thick] (-1.25*\X,-2*\Y) ellipse (25mm and 2mm);
\draw[dotted,thick] (1.75*\X,-2*\Y) ellipse (20mm and 2mm);

\node (q) at (-3.5*\X,-.5*\Y) {$A$};
\node (q) at (3.5*\X,-.5*\Y) {$B$};

\node (q) at (-4.2*\X,-2*\Y) {$D_1$};
\node (q) at (4.2*\X,-2*\Y) {$D_2$};

\end{tikzpicture}
\end{tabular}
\caption{The graph $G_6$. All vertices are drawn.}
\label{G6}
\end{figure}
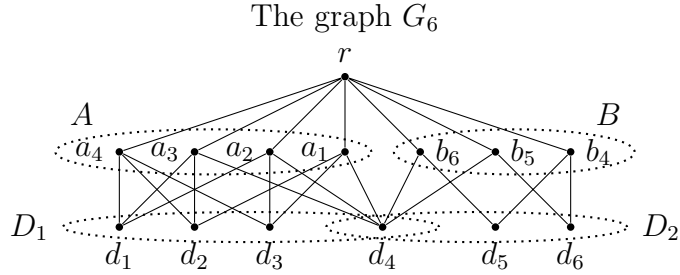

We now show that ${\diam_k(G_k)=k+3}$ and
${\rad_k(G_k)=k+1}$ via the Claims~\ref{claimr} and \ref{claimd}.

\begin{claim}\label{claimr}
In the graph $G_k$, we have $\e_k(r)=k+1$.
\end{claim}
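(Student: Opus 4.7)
My plan is to prove the two inequalities $\e_k(r)\leq k+1$ and $\e_k(r)\geq k+1$ separately.

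For the lower bound, I consider the particular set $S_0=\{r,d_1,d_2,\ldots,d_{k-1}\}$ and argue that every Steiner tree $T$ for $S_0$ satisfies $|V(T)|\geq k+2$, hence $\|T\|\geq k+1$. Since $S_0\subseteq V(T)$, it suffices to rule out $V(T)=S_0$ and $|V(T)\setminus S_0|=1$. The first case fails because $\{r\}\cup D$ is an independent set in $G_k$ ($r$ has no neighbor in $D$ and $D$ itself is independent), so no spanning tree exists. For the second, the only candidates for the extra vertex are $A\cup B\cup\{d_k\}$: if it is $d_k$, then still $V(T)\subseteq\{r\}\cup D$ induces no edges; if it is $a_i\in A$, then $d_i\in S_0$ (since $i\leq m\leq k-2$ for $k\geq 5$) has no neighbor in $V(T)$, because $a_i$ is not adjacent to $d_i$ by construction, nor is $r$ or any other $d_j$; if it is $b_j\in B$, then symmetrically $d_1\in S_0$ has no neighbor in $V(T)$, because $b_j$ is adjacent only to vertices of $D_2$ and $d_1\notin D_2$.

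For the upper bound, I would build, for every $S$ of size $k$ containing $r$, a Steiner tree of size at most $k+1$. Start at $r$ and attach each vertex of $S\cap(A\cup B)$ via a direct edge to $r$. Then absorb $S\cap D_1$ by picking one $a^*=a_i\in A$ with $d_i\notin S\cap D_1$, attaching $a^*$ to $r$ (if not already) and to each element of $S\cap D_1$; do the symmetric thing for $S\cap D_2$ with a $b^*$. When $d_m\in S$, assign it to only one side to avoid double-counting. The delicate case is $S\cap D_1=D_1$ (or symmetrically $S\cap D_2=D_2$), where no single $a_i$ covers $S\cap D_1$; the workaround is to cover $D_1\setminus\{d_m\}$ by $a_m$ on the $A$-side and to cover $d_m$ together with $S\cap D_2\setminus\{d_m\}$ by some $b^*=b_j$ on the $B$-side. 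Such a $b_j$ (with $j\neq m$ and $d_j$ outside the $B$-side target) exists because $|S|=k$ forbids $S\supseteq D$, so $S\cap D_2\neq D_2$ whenever $S\cap D_1=D_1$. A direct edge count in each subcase yields exactly $k+1$ edges (fewer if a chosen $a^*$ or $b^*$ already lies in $S$).

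The main obstacle is the borderline case $S\cap D_1=D_1$ (or its mirror). A naive cover with two $a$-vertices costs $k+2$ edges; the observation that saves the missing edge is that $d_m$ can be rerouted through the $B$-side, which is feasible precisely because $|S|=k$ prevents $S$ from simultaneously exhausting both $D_1$ and $D_2$.
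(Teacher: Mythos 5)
Your proof is correct and follows essentially the same route as the paper: the lower bound uses the same witness set $\{r,d_1,\ldots,d_{k-1}\}$ and the observation that two vertices of $A\cup B$ are forced, and the upper bound builds the same tree on $S\cup\{a^*,b^*\}$ with two hub vertices covering $S\cap D$. The only difference is that you spell out the case analysis (in particular the borderline case $S\cap D_1=D_1$, resolved by rerouting $d_m$ through the $B$-side) that the paper's one-line assertion ``there exist $a,b\in A\cup B$ with $S\cap D\subseteq N(a)\cup N(b)$'' leaves implicit.
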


\begin{proof}
We first show that $\e_k(r)\leq k+1$.
Suppose that $S\in \binom{V(G_k)}{k}$ with $r\in S$ such that $d(S)=e_k(r)$. Note that $|D\cap S|\leq k-1$ which implies that there exist $a,b\in A\cup B$ such that $(S\cap D) \subseteq (N(a)\cup N(b))$. So the Steiner tree of $S\cup \{a,b\}$ has order at most $k+2$ and size at most $k+1$. Since $S\subseteq (S\cup \{a,b\})$, we have $d(S)\leq k+1$. This implies that $\e_k(r)\leq k+1$. 

To show that $\e_k(r)\geq k+1$, consider the set 
$R=\{r,d_1,d_2,\ldots,d_{k-1}\}$.
Since $|R\cap D| > \lceil k/2 \rceil+1$,
any Steiner tree of $R$ must contain at least two elements of $A\cup B$.
So any Steiner tree of $R$ must have order $k+2$ and size $k+1$.
Hence, $e_k(r)\geq d(R) \geq k+1$. 
A Steiner tree of $S$ is illustrated on the right side of Figure~\ref{Gcheck}. 
\end{proof}

\begin{figure}[ht!]
\centering
%\SingleSpacing
\begin{tabular}{ll}
\begin{tikzpicture}[,node distance=30mm]
\newcommand\X{1};
\newcommand\Y{1};
\node[minimum size=1mm,inner sep=0pt] (r) at (0,0) [circle, 
fill=black,label=above:$r$] {};

\node[minimum size=1mm,inner sep=0pt] (a1) at (-2.5*\X,-1*\Y) [circle, 
fill=black,label= left:$a_3$] {};
\node[minimum size=1mm,inner sep=0pt] (a2) at (-1.5*\X,-1*\Y) [circle, 
fill=black,label=left:$a_2$] {};
\node[minimum size=1mm,inner sep=0pt] (a3) at (-.5*\X,-1*\Y) [circle, 
fill=black,label=left:$a_1$] {};

\node[minimum size=1mm,inner sep=0pt] (b4) at (.5*\X,-1*\Y) [circle, 
fill=black,label=right:$b_5$] {};
\node[minimum size=1mm,inner sep=0pt] (b5) at (1.5*\X,-1*\Y) [circle, 
fill=black,label=right:$b_4$] {};
\node[minimum size=1mm,inner sep=0pt] (b6) at (2.5*\X,-1*\Y) [circle, 
fill=black,label= right:$b_3$] {};

\node[minimum size=1mm,inner sep=0pt] (d1) at (-2.5*\X,-2*\Y) [circle, 
fill=black,label=below:$d_1$] {};
\node[minimum size=1mm,inner sep=0pt] (d2) at (-1.5*\X,-2*\Y) [circle, 
fill=black,label=below:$d_2$] {};
\node[minimum size=1mm,inner sep=0pt] (d3) at (0*\X,-2*\Y) [circle, 
fill=black,label=below:$d_3$] {};
\node[minimum size=1mm,inner sep=0pt] (d4) at (1.5*\X,-2*\Y) [circle, 
fill=black,label=below:$d_4$] {};
\node[minimum size=1mm,inner sep=0pt] (d5) at (2.5*\X,-2*\Y) [circle, 
fill=black,label=below:$d_5$] {};

\draw (r)--(a1) (r)--(b4);

\draw (r)--(a2) (r)--(a3) (r)--(b5) (r)--(b6);

\draw[ultra thick] (a1)--(d1) (a1)--(d2);
\draw (a2)--(d1) (a2)--(d3);   
\draw[ultra thick] (a3)--(d2) (a3)--(d3);

\draw (d3)--(b4) (d4)--(b4) (d3)--(b5) (d5)--(b5);

\draw[ultra thick] (d5)--(b6) (d4)--(b6);
\draw[ultra thick] (d3)--(b4) (d4)--(b4);

%\draw[dotted,thick] (-1.5*\X,-\Y) ellipse (14mm and 3mm);
%\draw[dotted,thick] (1.5*\X,-\Y) ellipse (14mm and 3mm);

%\draw[dotted,thick] (-1.25*\X,-2*\Y) ellipse (20mm and 2mm);
%\draw[dotted,thick] (1.25*\X,-2*\Y) ellipse (20mm and 2mm);

%\node (q) at (-3.5*\X,-1*\Y) {$A$};
%\node (q) at (3.5*\X,-1*\Y) {$B$};

%\node (q) at (-3.7*\X,-2*\Y) {$D_1$};
%\node (q) at (3.7*\X,-2*\Y) {$D_2$};
\end{tikzpicture}
\hspace{.25in}
&
\begin{tikzpicture}[,node distance=30mm]
\newcommand\X{1};
\newcommand\Y{1};
\node[minimum size=1mm,inner sep=0pt] (r) at (0,0) [circle, 
fill=black,label=above:$r$] {};

\node[minimum size=1mm,inner sep=0pt] (a1) at (-2.5*\X,-1*\Y) [circle, 
fill=black,label= left:$a_3$] {};
\node[minimum size=1mm,inner sep=0pt] (a2) at (-1.5*\X,-1*\Y) [circle, 
fill=black,label=left:$a_2$] {};
\node[minimum size=1mm,inner sep=0pt] (a3) at (-.5*\X,-1*\Y) [circle, 
fill=black,label=left:$a_1$] {};

\node[minimum size=1mm,inner sep=0pt] (b4) at (.5*\X,-1*\Y) [circle, 
fill=black,label=right:$b_5$] {};
\node[minimum size=1mm,inner sep=0pt] (b5) at (1.5*\X,-1*\Y) [circle, 
fill=black,label=right:$b_4$] {};
\node[minimum size=1mm,inner sep=0pt] (b6) at (2.5*\X,-1*\Y) [circle, 
fill=black,label= right:$b_3$] {};

\node[minimum size=1mm,inner sep=0pt] (d1) at (-2.5*\X,-2*\Y) [circle, 
fill=black,label=below:$d_1$] {};
\node[minimum size=1mm,inner sep=0pt] (d2) at (-1.5*\X,-2*\Y) [circle, 
fill=black,label=below:$d_2$] {};
\node[minimum size=1mm,inner sep=0pt] (d3) at (0*\X,-2*\Y) [circle, 
fill=black,label=below:$d_3$] {};
\node[minimum size=1mm,inner sep=0pt] (d4) at (1.5*\X,-2*\Y) [circle, 
fill=black,label=below:$d_4$] {};
\node[minimum size=1mm,inner sep=0pt] (d5) at (2.5*\X,-2*\Y) [circle, 
fill=black,label=below:$d_5$] {};

\draw[ultra thick] (r)--(a1) (r)--(b4);

\draw (r)--(a2) (r)--(a3) (r)--(b5) (r)--(b6);

\draw[ultra thick] (a1)--(d1) (a1)--(d2);
\draw (a2)--(d1) (a2)--(d3);   
\draw (a3)--(d2) (a3)--(d3);

\draw (d3)--(b4) (d4)--(b4) (d3)--(b5) (d5)--(b5) (d4)--(b6) (d5)--(b6);

\draw[ultra thick] (d3)--(b4) (d4)--(b4);

%\draw[dotted,thick] (-1.5*\X,-\Y) ellipse (14mm and 3mm);
%\draw[dotted,thick] (1.5*\X,-\Y) ellipse (14mm and 3mm);

%\draw[dotted,thick] (-1.25*\X,-2*\Y) ellipse (20mm and 2mm);
%\draw[dotted,thick] (1.25*\X,-2*\Y) ellipse (20mm and 2mm);

%\node (q) at (-3.5*\X,-1*\Y) {$A$};
%\node (q) at (3.5*\X,-1*\Y) {$B$};

%\node (q) at (-3.7*\X,-2*\Y) {$D_1$};
%\node (q) at (3.7*\X,-2*\Y) {$D_2$};
\end{tikzpicture}\\
A Steiner tree for $D$ realizing
&
A Steiner tree for $R=\{r,d_1,d_2,d_3,d_4\}$\\
$d(D)=\diam_5(G_5)=8$.
&
realizing $d(R)=\rad_5(G_5)=6$.
\end{tabular}
\caption{Steiner trees in the graph $G_5$ for $D$ as in Definition~\ref{defn} and $R$. The Steiner trees are in bold.}
\label{Gcheck}
\end{figure}
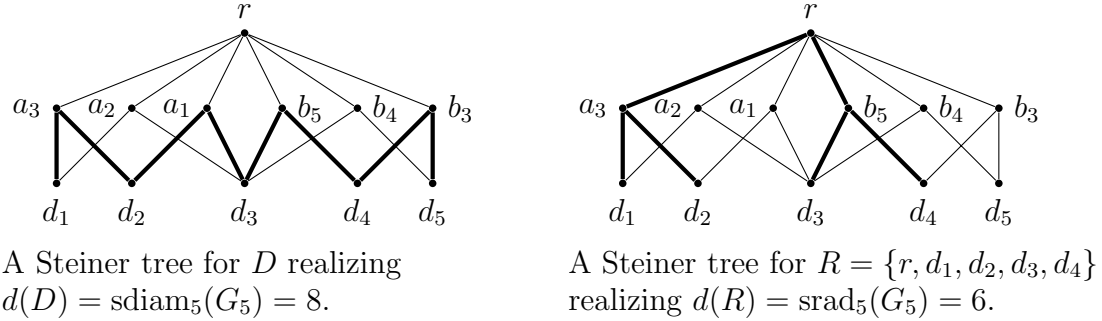

Recall that $\rad(G_k)\leq \e_k(r)$,
which implies that $\rad_k(G_k)\leq k+1$.
So by Theorem~\ref{MAIN} we need only find a vertex $v\in V(G_k)$
with $e_k(v)\geq k+3$ to prove that $\rad_k(G_k)=k+1$ and
$\diam_k(G_k)=k+3$. 
This will show that Theorem~\ref{MAIN} is tight. 
We now consider the vertex $d_1$. 

\begin{claim}\label{claimd}
In the graph $G_k$, $\e_k(d_1)= k+3$.
\end{claim}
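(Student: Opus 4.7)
The plan is to prove $\e_k(d_1) \leq k+3$ and $\e_k(d_1) \geq k+3$ separately. For the upper bound, I would combine Claim~\ref{claimr} with Theorem~\ref{MAIN}: since $\rad_k(G_k) \leq \e_k(r) = k+1$, Theorem~\ref{MAIN} yields $\diam_k(G_k) \leq \frac{k+3}{k+1}\rad_k(G_k) \leq k+3$, and hence $\e_k(d_1) \leq \diam_k(G_k) \leq k+3$.

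For the lower bound, I would exhibit the $k$-set $S = D = \{d_1,\ldots,d_k\}$ (which contains $d_1$) and show $d(D) \geq k+3$. Because any tree $T$ spanning $D$ satisfies $|E(T)| = k - 1 + |V(T)\setminus D|$, it suffices to verify that every such $T$ contains at least four vertices of $V(G_k)\setminus D = A \cup B \cup \{r\}$. Two structural facts about $G_k$ drive the rest: (a) each $d_i \in D_1\setminus\{d_m\}$ has $N_{G_k}(d_i) \subseteq A\setminus\{a_i\}$, with the symmetric statement for $D_2\setminus\{d_m\}$ and $B$; and (b) the vertex $d_m$ is nonadjacent to both $a_m$ and $b_m$. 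Each is immediate from the construction.

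With these in hand, I would run a short case analysis on the pair $(s_A, s_B) := (|V(T)\cap A|, |V(T)\cap B|)$. By (a), $s_A, s_B \geq 1$, and whenever $s_A = 1$ the unique $A$-vertex of $T$ must be $a_m$ (otherwise some $d_i$ with $i\neq m$ has no neighbor in $T$); a symmetric statement holds for $s_B$. Fact (b) excludes $s_A = s_B = 1$, since then $d_m$ would be isolated in $T$. In the case $s_A = 1$, the only neighbor of $a_m$ outside $D_1$ is $r$ and no vertex of $D_1 \setminus \{d_m\}$ is adjacent to any vertex of $B \cup D_2$; the $A$-side and $B$-side of $T$ can then only be joined through $r$, forcing $r \in V(T)$ and $|V(T)\setminus D| \geq 1 + 2 + 1 = 4$. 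The case $s_B = 1$ is symmetric, and $s_A, s_B \geq 2$ already supplies four Steiner vertices. The main obstacle I expect is this $s_A = 1$ step, where one must carefully verify that no path in $T$ joins the two sides without using $r$; once that is pinned down, the four-case calculation gives $d(D) \geq k+3$ and the two bounds combine to the claimed equality.
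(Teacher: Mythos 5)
Your proposal is correct and follows essentially the same route as the paper: the upper bound via $\rad_k(G_k)\leq \e_k(r)=k+1$ together with Theorem~\ref{MAIN}, and the lower bound by showing any Steiner tree $T$ for $D$ must contain at least four vertices outside $D$, with the key step being that when one of $A$, $B$ meets $T$ in a single vertex that vertex must be the $m$-indexed one and $r$ is then forced into $T$. Your case analysis on $(|V(T)\cap A|,|V(T)\cap B|)$ is organized slightly more explicitly than the paper's count of $|V(T)\cap(A\cup B)|$, but the argument is the same.
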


\begin{proof}
Note that $D\in \binom{V(G_k)}{k}$ and $d_1\in D$ so $d(D)\leq \e_k(d_1)$. 
Let $T$ be a Steiner tree of $D$. 
Since $|D|=k$, we have $|V(T)\cap (A\cup B)|\geq 3$. 
If $|V(T)\cap (A\cup B)|=4$, then $|T|\geq k+4$ and $\|T\|\geq k+3$. 

We now suppose $|V(T)\cap (A\cup B)| = 3$. 
This implies that either $|V(T)\cap A|=2$ or $|V(T)\cap B|=2$.
Without loss of generality, we 
suppose the former. This implies that $V(T)\cap B=\{b_m\}$. Since $T$ is 
connected, $T$ contains a path from $d_m$ to $b_m$. Such a path either
contains 
a second element from $B$ or the vertex $r$. Since $|V(T)\cap B|=1$, we
must 
have that $T$ contains $r$. Hence, $|T|\geq k+4$ and $\|T\|\geq k+3$. 
We have now shown that $\e_k(d_1)\geq k+3$. 
\end{proof}

We have shown in Claim~\ref{claimr} $\e_k(r)\leq k+1$ and in Claim~\ref{claimd} that $\e_k(d_1)\geq k+3$.
With Claims~\ref{claimd} and \ref{claimr} in hand, we have 
$\rad_k(G_k)\leq k+1$ and $\diam_k(G_k)\geq k+3$.
Applying Theorem~\ref{MAIN}, 
we infer the following proposition, which shows the bound in Theorem~\ref{MAIN} is tight. 

\begin{prop}
For $k\geq 5$ the graph $G_k$ satisfies $\diam_k(G_k)=k+3$ and $\rad_k(G_k)=k+1$.
\end{prop}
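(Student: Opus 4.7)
The plan is to observe that the two claims established immediately before the proposition, together with Theorem~\ref{MAIN}, pin down both $\rad_k(G_k)$ and $\diam_k(G_k)$ exactly via a sandwiching argument. There is essentially no new combinatorial work to do; the proof is a short chain of inequalities.

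First I would record the easy direction. Since $r \in V(G_k)$, Claim~\ref{claimr} gives $\rad_k(G_k) \leq \e_k(r) = k+1$. Since $d_1 \in V(G_k)$, Claim~\ref{claimd} gives $\diam_k(G_k) \geq \e_k(d_1) \geq k+3$. So we have the two bounds
\[
\rad_k(G_k) \leq k+1 \quad \text{and} \quad \diam_k(G_k) \geq k+3.
\]

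Next I would invoke Theorem~\ref{MAIN} to close the gap. Applying it to $G_k$ yields
\[
k+3 \leq \diam_k(G_k) \leq \frac{k+3}{k+1}\rad_k(G_k) \leq \frac{k+3}{k+1}(k+1) = k+3,
\]
so every inequality in the chain is an equality. Reading off the middle and rightmost equalities gives $\diam_k(G_k) = k+3$ and $\rad_k(G_k) = k+1$, as claimed.

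There is no real obstacle here, since all the structural work was done in Claims~\ref{claimr} and \ref{claimd}; the only thing to be mildly careful about is the direction of the Theorem~\ref{MAIN} inequality and the fact that it requires $k \geq 5$, which is exactly the hypothesis of the proposition. I would conclude with a one-line remark that this construction therefore certifies that the constant $\frac{k+3}{k+1}$ in Theorem~\ref{MAIN} cannot be improved for any $k \geq 5$.
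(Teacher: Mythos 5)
Your proposal is correct and is essentially identical to the paper's own argument: the paper likewise combines Claim~\ref{claimr} (giving $\rad_k(G_k)\leq \e_k(r)= k+1$) and Claim~\ref{claimd} (giving $\diam_k(G_k)\geq \e_k(d_1)\geq k+3$) with Theorem~\ref{MAIN} to force equality in the resulting chain of inequalities.
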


\section{Proof of Conjecture \ref{bad} for $k=4$}\label{ub4}

We now consider the case of $k=4$. 
Following the methods in \cite{HEN90}, 
we suppose towards a contradiction that there exists 
a connected graph $G$ satisfying 
$$
\diam_4(G) > \frac{10}{7}\rad_4(G).
$$
Suppose that $D=\{v_1,v_2,v_3,v_4\}$ is a set of vertices in $G$ such that 
$d(D)=\diam_4(G)$ and suppose $v_0\in C_4(G)$. For $1\leq i\leq 4$, 
define $D_i$, $T_i$, 
$T_i'$, and $\ell_i$ as in Definition~\ref{defn}. Again, we assume that 
$\ell_{1}\leq \ell_j$ for $j\geq 2$. 

\noindent \textbf{Remark}:
In \cite{HEN90}, the authors show that such a counterexample must satisfy
the following two properties:
\begin{enumerate}
\item For each $1\leq i\leq 4$, the tree $T_i$ must have exactly four leaves.
We label the leaves $v_0,u_1^{(i)},u_2^{(i)},u_3^{(i)},u_4^{(i)}$ so that $v_0$
and $u_3^{(i)}$ share a nearest vertex $s_i$ of degree at least 3 
while $u_1^{(i)}$ and $u_2^{(i)}$ share a nearest vertex $t_i$ 
of degree at least 3. It is possible that $s_i=t_i$.
This implies that each tree $T_i$ is of the form shown in Figure~\ref{G4w}
where
$$
\begin{array}{cc}
a:=d_{T_1}(u_1^{(i)},t)\hspace{.5in}& b:=d_{T_1}(u_2^{(i)},t)\\
c:=d_{T_1}(u_3^{(i)},s)\hspace{.5in}& d:=d_{T_1}(s,t).
\end{array}
$$
An illustration of each tree $T_i$ is included in Figure~\ref{G4w}.
\item The length $\ell_1< \dfrac{1}{10}\diam_4(G)$. 
We provide a short justification of this fact in the following argument. 
\end{enumerate}
After proving these claims, the authors
define $T_i''$ to be the subtree of $T_i$ 
obtained by deleting the vertices in the $u_3^{(i)}-s_i$ path except for $s$.
Figure~\ref{G4w} illustrates the difference between $T_i$ and $T_i''$.
We will not use $T''$ in this section but will examine it in the following section. 
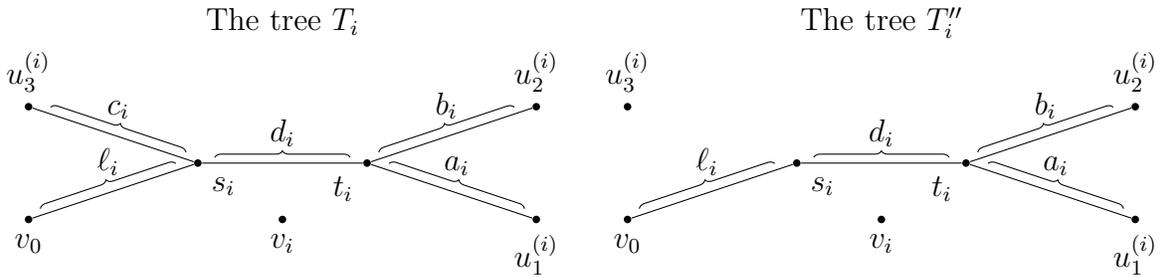
\begin{figure}[ht!]
\centering
%\SingleSpacing
\begin{tabular}{cc}
The tree $T_i$ 
&
The tree $T_i''$\\
\begin{tikzpicture}[scale=.75]%[every node/.style={draw=black,thick,circle,inner sep=0pt,fill=black}]
\newcommand\X{3};
\newcommand\Y{1};
\node[minimum size=1mm,inner sep=0pt] (r) at (0,0) [circle, 
fill=black,label=below:$v_0$] {};
\node[minimum size=1mm,inner sep=0pt] (u1) at (3*\X,0) [circle, 
fill=black,label=below:$u_1^{(i)}$] {};
\node[minimum size=1mm,inner sep=0pt] (u2) at (3*\X,2*\Y) [circle, 
fill=black,label=above:$u_2^{(i)}$] {};
\node[minimum size=1mm,inner sep=0pt] (u3) at (0,2*\Y) [circle, 
fill=black,label=above:$u_3^{(i)}$] {};
\node[minimum size=1mm,inner sep=0pt] (s) at (1*\X,1*\Y) [circle, 
fill=black,label=below right:$s_i$] {};
\node[minimum size=1mm,inner sep=0pt] (t) at (2*\X,1*\Y) [circle, 
fill=black,label=below left:$t_i$] {};
\node[minimum size=1mm,inner sep=0pt] (v1) at (1.5*\X,0*\Y) [circle, 
fill=black,label=below:$v_i$] {};
\draw  (r) edge (s) (s) edge (u3);
\draw (u2) edge (t) (t) edge (u1);
\draw (s) edge (t);
\draw[decorate, decoration={brace}, yshift=.5ex,xshift=-.5ex]  (.1*\X,.1*\Y) -- 
node[above] {$\ell_i$}  (\X-.1*\X,.9*\Y);
\draw[decorate, decoration={brace}, yshift=.5ex,xshift=.5ex]  (.1*\X,1.9*\Y) -- 
node[above] {$c_i$}  (.9*\X,1.1*\Y);
\draw[decorate, decoration={brace}, yshift=.5ex,xshift=-.5ex]  (2.1*\X,1.1*\Y) 
-- node[above] {$b_i$}  (2.9*\X,1.9*\Y);
\draw[decorate, decoration={brace}, yshift=.5ex,xshift=.5ex]  
(2*\X+.1*\X,\Y-.1*\Y) -- node[above] {$a_i$}  (3*\X-.1*\X,.1*\Y);
\draw[decorate, decoration={brace}, yshift=0ex]  (1.1*\X,1.1*\Y) -- node[above] 
{$d_i$}  (1.9*\X,1.1*\Y);
\end{tikzpicture}
&
\begin{tikzpicture}[scale=.75]%[every node/.style={draw=black,thick,circle,inner sep=0pt,fill=black}]
\newcommand\X{3};
\newcommand\Y{1};
\node[minimum size=1mm,inner sep=0pt] (r) at (0,0) [circle, 
fill=black,label=below:$v_0$] {};
\node[minimum size=1mm,inner sep=0pt] (u1) at (3*\X,0) [circle, 
fill=black,label=below:$u_1^{(i)}$] {};
\node[minimum size=1mm,inner sep=0pt] (u2) at (3*\X,2*\Y) [circle, 
fill=black,label=above:$u_2^{(i)}$] {};
\node[minimum size=1mm,inner sep=0pt] (u3) at (0,2*\Y) [circle, 
fill=black,label=above:$u_3^{(i)}$] {};
\node[minimum size=1mm,inner sep=0pt] (s) at (1*\X,1*\Y) [circle, 
fill=black,label=below right:$s_i$] {};
\node[minimum size=1mm,inner sep=0pt] (t) at (2*\X,1*\Y) [circle, 
fill=black,label=below left:$t_i$] {};
\node[minimum size=1mm,inner sep=0pt] (v1) at (1.5*\X,0*\Y) [circle, 
fill=black,label=below:$v_i$] {};
\draw  (r) edge (s);
\draw (u2) edge (t) (t) edge (u1);
\draw (s) edge (t);
\draw[decorate, decoration={brace}, yshift=.5ex,xshift=-.5ex]  (.1*\X,.1*\Y) -- 
node[above] {$\ell_i$}  (\X-.1*\X,.9*\Y);
\draw[decorate, decoration={brace}, yshift=.5ex,xshift=-.5ex]  (2.1*\X,1.1*\Y) 
-- node[above] {$b_i$}  (2.9*\X,1.9*\Y);
\draw[decorate, decoration={brace}, yshift=.5ex,xshift=.5ex]  
(2*\X+.1*\X,\Y-.1*\Y) -- node[above] {$a_i$}  (3*\X-.1*\X,.1*\Y);
\draw[decorate, decoration={brace}, yshift=0ex]  (1.1*\X,1.1*\Y) -- node[above] 
{$d_i$}  (1.9*\X,1.1*\Y);
\end{tikzpicture}
\end{tabular}
\caption{The trees $T_i$ and $T_i''$. Vertices of degree 2 are not drawn.}
\label{G4w}
\end{figure}

In reference to tree $T_1$, we consider the sum 
$$
2(\ell_1+a_1+b_1+c_1+d_1)=(\ell_1+c_1)+(c_1+d_1+b_1)+(a_1+b_1)+(\ell_1+d_1+a_1).
$$
By Corollary~\ref{distlemobs}, the left hand side is bounded below by
$$
(\ell_1+c_1)+(c+d_1+b_1)+(a_1+b_1)+(\ell_1+d_1+a_1)>\frac{12}{10}\diam_4(G)+2\ell_1,
$$
while, as in the previous case, by equation \eqref{4radsm}, we have 
that the right hand side is bounded below by
$$
2(\ell_1+a_1+b_1+c_1+d_1)=2\|T_i\|<\frac{14}{10}\diam_4(G).
$$
Combining these inequalities together, we have that 
$$
\frac{12}{10}\diam_4(G)+2\ell_1<\frac{14}{10}\diam_4(G),
$$
which implies that 
\begin{equation}
\label{4ellsm}
\ell_1<\frac{1}{10}\diam_4(G).
\end{equation} Alternatively, we may consider the sum 
$$
2\ell_1+2(\ell_1+a_1+b_1+c_1+d_1)=(\ell_1+d_1+b_1)+(\ell_1+d_1+a_1)+2(\ell_1+c_1)+(a_1+b_1).
$$
Applying Corollary~\ref{distlemobs}, we see that
\begin{align*}
(\ell_1+d_1+b_1)+(\ell_1+d_1+a_1)+2(\ell_1+c_1)+(a_1+b_1)>\frac{15}{10}\diam_4(G)+\ell_1. 
\end{align*}
But by equation \eqref{4radsm}, we have that 
$$
2\ell_1+2(\ell_1+a_1+b_1+c_1+d_1)<\frac{14}{10}\diam_4(G)+2\ell_1. 
$$
Combining these inequalities together, we see that 
$$
\frac{15}{10}\diam_4(G)+\ell_1<\frac{14}{10}\diam_4(G)+2\ell_1, 
$$
which implies that $\ell_1>\frac{1}{10}\diam_4(G)$, a contradiction of 
equation \eqref{4ellsm}. Hence, no such graph $G$ exists
and the result is proven. 

\section*{Examining a previous proof}

We now examine to the proof for $k=4$ presented in \cite{HEN90}.
The authors of \cite{HEN90} show correctly that if $G$ is a graph with 
$\diam_4(G)>\dfrac{10}{7}\rad_4(G)$, then, in reference to 
Definition~\ref{defn}, $G$ must satisfy $\ell_1<\dfrac{1}{10}\diam_4(G)$
and $d(u,v)>\dfrac{3}{10}\diam_4(G)$ for any $u,v\in D\cup\{v_0\}$. 

In their displayed equations 8, 9, and 10 
of Case 2 of Theorem~3 in \cite{HEN90},
 the authors make the following claims:
\begin{claim}[See \cite{HEN90}, Case 2 of Theorem 3]\label{wrong}
If $G$ is a graph satisfying $\ell_1<\dfrac{1}{10}\diam_4(G)$ and 
$d(u,v)>\dfrac{3}{10}\diam_4(G)$ for any $u,v\in D\cup\{v_0\}$.
In reference to Figure~\ref{G4w}, 
\begin{align*}
\|T_{2}''\|+a_1+b_1&\geq \diam_4(G)\\
\|T_{2}''\|+a_1+d_1+\ell_1&\geq \diam_4(G)\\
\|T_{3}''\|+b_1+d_1+\ell_1&\geq \diam_4(G).
\end{align*}
\end{claim}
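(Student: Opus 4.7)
The natural plan for each inequality is to exhibit a connected subgraph of $G$ that contains $D$ and whose edge count equals the right-hand side; since any $D$-spanning connected subgraph has at least $d(D)=\diam_4(G)$ edges, the inequality would then follow. For the first inequality I would take the union of $T_2''$ with the subpath of $T_1$ joining the leaves $u_1^{(1)}$ and $u_2^{(1)}$ (of length $a_1+b_1$). For the second inequality I would combine $T_2''$ with the $v_0$-to-$u_1^{(1)}$ path in $T_1$ (of length $\ell_1+d_1+a_1$), and for the third, combine $T_3''$ with the $v_0$-to-$u_2^{(1)}$ path in $T_1$ (of length $\ell_1+d_1+b_1$).

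In each case, the two pieces are edge-disjoint, so the edge counts add precisely to the right-hand side expression. Connectivity is automatic for the second and third inequalities because both pieces contain $v_0$; for the first inequality, one would additionally need to argue that $T_2''$ meets the $u_1^{(1)}$-to-$u_2^{(1)}$ subpath in at least one vertex, which is not guaranteed since neither $v_0$ nor the leaves $u_j^{(1)}$ are forced to lie in $T_2''$.

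The genuine obstacle in all three cases is verifying that the union covers every vertex of $D=\{v_1,v_2,v_3,v_4\}$. Consider the second inequality: $T_2''$ covers $v_0$ together with two of $\{v_1,v_3,v_4\}$ (namely those leaves of $T_2$ other than $u_3^{(2)}$), while the $v_0$-to-$u_1^{(1)}$ path contributes only $u_1^{(1)}\in\{v_2,v_3,v_4\}$. The union therefore contains at most four of the five vertices $\{v_0,v_1,v_2,v_3,v_4\}$, and $v_0$ must be one of them, so at most three members of $D$ are automatically covered. To exhaust $D$ one would need a favorable labeling, for instance $u_1^{(1)}=v_2$ together with $u_3^{(2)}$ being covered by $T_2''$, which is a contradiction since $T_2''$ was obtained by deleting the $u_3^{(2)}$-branch.

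This is the main difficulty I foresee: the straightforward union construction seems to leave at least one element of $D$ uncovered in the second and third inequalities, regardless of how the leaves are labeled. To rescue the argument, one would need a subtler spanning construction of the same prescribed size, and I am skeptical that such a construction exists in general. I expect that this is precisely the gap that Section~\ref{cor} of the present paper isolates as the flaw in the original proof of \cite{HEN90}.
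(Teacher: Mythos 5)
Your skepticism is exactly right, and you should trust it: Claim~\ref{wrong} is \emph{false}, and the paper does not prove it — it refutes it. The claim is reproduced here only because it is the erroneous step in the $k=4$ argument of \cite{HEN90}, and the surrounding section exists precisely to exhibit a counterexample. Your diagnosis of where the natural argument breaks is the correct one: gluing $T_2''$ to a subpath of $T_1$ gives a connected subgraph of the advertised size (when it is connected at all), but there is no way to guarantee that the union contains every vertex of $D$, since $T_2''$ omits the leaf $u_3^{(2)}$ by construction and the attached path of $T_1$ supplies only one or two further elements of $D$, with no control over which ones. Without a $D$-spanning subgraph, the lower bound $d(D)=\diam_4(G)$ never enters, and the inequalities have no source.

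The one thing your write-up stops short of is converting ``the obvious proof fails'' into ``the statement is false,'' which requires an explicit graph. The paper supplies one: the graph $H$ of Figure~\ref{dada}, obtained from $K_{4,4}$ minus a perfect matching by appending an apex $v_0$ and subdividing edges so that $d(v_0,u_i)=1$ for $i\leq 3$, $d(v_0,u_4)=2$, and $d(u_i,v_j)=6$ for the surviving edges. There one computes $\|T_2''\|=13$ and $a_1=b_1=6$, so $\|T_2''\|+a_1+b_1=25<26=\diam_4(H)$, violating the first inequality; the others fail on similar graphs. So: do not attempt to prove this claim — record instead that it is the flawed step of \cite{HEN90}, that your coverage objection is the reason it is unprovable, and that a concrete counterexample along the lines above settles the matter.
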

The first of these claims is violated by the graph $H$ illustrated in
Figure~\ref{dada}.
We obtain $H$ by taking a the complete bipartite graph $K_{4,4}$ with bipartite 
sets $U=\{u_1,u_2,u_3,u_4\}$ and $V=\{v_1,v_2,v_3,v_4\}$ and deleting the 
matching $\{u_1v_4,u_2v_3,u_3v_2,u_4v_1\}$. 
We then adjoin a new vertex $v_0$ to 
each vertex in $U$. 
Finally, we subdivide the edge $v_0u_4$ once 
and each edge of the form $uv$ where $u\in U$ and $v\in V$ 
we subdivide five times. 
This gives the following distances in $H$:
$$
d(v_0,u_i)=1, 1\leq i\leq 3; \ d(v_0,u_4)=2;\text{ and }
d(u_i,v_j)=6\text{ where } i\neq 5-j.
$$

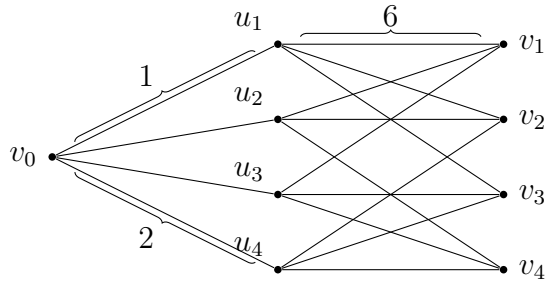
\begin{figure}[ht!]
\centering
%\SingleSpacing
\begin{tikzpicture}%[scale=.75]%[every node/.style={draw=black,thick,circle,inner sep=0pt,fill=black}]
\newcommand\X{3};
\newcommand\Y{1};
\node[minimum size=1mm,inner sep=0pt] (r) at (0,0) [circle, 
fill=black,label=left:$v_0$] {};

\node[minimum size=1mm,inner sep=0pt] (u1) at (1*\X,1.5*\Y) [circle, fill=black,label=above left: $u_1$] 
{};
\node[minimum size=1mm,inner sep=0pt] (u2) at (1*\X,.5*\Y) [circle, fill=black,label=above left: $u_2$] 
{};
\node[minimum size=1mm,inner sep=0pt] (u3) at (1*\X,-.5*\Y) [circle, fill=black,label=above left: $u_3$] 
{};
\node[minimum size=1mm,inner sep=0pt] (u4) at (1*\X,-1.5*\Y) [circle, 
fill=black,label=above left: $u_4$] {};

\node[minimum size=1mm,inner sep=0pt] (v1) at (2*\X,1.5*\Y) [circle, 
fill=black,label=right: $v_1$] {};
\node[minimum size=1mm,inner sep=0pt] (v2) at (2*\X,.5*\Y) [circle, 
fill=black,label=right: $v_2$] {};
\node[minimum size=1mm,inner sep=0pt] (v3) at (2*\X,-.5*\Y) [circle, 
fill=black,label=right: $v_3$] {};
\node[minimum size=1mm,inner sep=0pt] (v4) at (2*\X,-1.5*\Y) [circle, 
fill=black,label=right: $v_4$] {};

\draw (u1) edge (v1) (u1) edge (v2) (u1) edge (v3);
\draw (u2) edge (v1) (u2) edge (v2) (u2) edge (v4);
\draw (u3) edge (v1) (u3) edge (v3) (u3) edge (v4);
\draw (u4) edge (v2) (u4) edge (v3) (u4) edge (v4);

\draw (r) edge (u1) (r) edge (u2) (r) edge (u3) (r) edge (u4);

\draw[decorate, decoration={brace}, yshift=0ex]  (1.1*\X,1.6*\Y) -- node[above] 
{$6$}  (1.9*\X,1.6*\Y);
\draw[decorate, decoration={brace}]  (.1*\X,.2*\Y) -- node[above left] 
{$1$}  (.9*\X,1.4*\Y);
\draw[decorate, decoration={brace}]  (.9*\X,-1.4*\Y) -- node[below left] 
{$2$}  (.1*\X,-.2*\Y);
\end{tikzpicture}
\caption{The graph $H$. Vertices of degree 2 are not drawn.}
\label{dada}
\end{figure}
In reference to the tree $T_i$ in Figure~\ref{G4w} as a subgraph of the graph
$H$ in Figure~\ref{dada}, we have that in the tree $T_1$
satisfies the following: $a_1=6$, $b_1=6$, $c_1=6$, $d_1=0$, 
and $\ell_1=2$. 
We have that 
$$
\rad_4(H)=\e_4(v_0)=3(6)+2=20
$$
while 
$$
\diam_4(H)=d(v_1,v_2,v_3,v_4)=4(6)+2(1)=26.
$$
In reference to the graph $H$, from the first displayed equation in
Claim~\ref{wrong}, we have that $\|T_2''\|=2(6)+1=13$,
while within the tree $T_1$, we have $a_1=6$, and $b_1=6$. So 
$$
\|T_2''\|+a_1+b_1=25<\diam_4(H),
$$
which contradicts the first inequality in Claim~\ref{wrong}.
The remaining inequalities in Claim~\ref{wrong} can be violated graphs
similar to $H$. 

\section{Completing the proof for $k=4$ using arguments from \cite{HEN90}}
\label{complete}

For completeness, we show that if $G$ is a graph satisfying 
$\diam_4(G)>\dfrac{10}{7}\rad_4(G)$, 
then each subtree $T_i$ as in Definition~\ref{defn} contains four leaves. 
This section restates the arguments correctly presented in \cite{HEN90} 
using the notation of this paper.
\begin{proof}
Suppose $G$ is a graph satisfying 
$$
\diam_4(G) > \frac{10}{7}\rad_4(G).
$$
This implies that 
\begin{equation}
\label{4radsm}
\rad_4(G)<\frac{7}{10}\diam_4(G).
\end{equation}
Suppose that $D=\{v_1,v_2,v_3,v_4\}$ is a set of vertices in $G$ such that 
$d(D)=\diam_4(G)$ and $v_0\in C_4(G)$. For $1\leq i\leq 4$, 
define $D_i$, $T_i$, 
$T_i'$, and $\ell_i$ as in Definition~\ref{defn}. Again, we assume that 
$\ell_{1}\leq \ell_j$ for $j\geq 2$. 

We consider the cases where $T_1$ is a path or a subdivision of the star 
on three vertices.
First, suppose that $T_1$ is a path. Relabel the elements of $D_1$ as 
$u_1,u_2,u_3$ and $u_4$ so that the tree $T_1$ is a concatenation of paths 
$u_1-u_2-u_3-u_4$. See Figure~\ref{gpath} for an illustration of this situation. 

\begin{figure}[ht!]
\centering
%\SingleSpacing
\begin{tabular}{c}
\begin{tikzpicture}%[every node/.style={draw=black,thick,circle,inner sep=0pt,fill=black}]
\newcommand\X{4};
\newcommand\Y{1};
\node[minimum size=1mm,inner sep=0pt] (u0) at (0,0*\Y) [circle, 
fill=black,label=below:$u_1$] {};
\node[minimum size=1mm,inner sep=0pt] (u1) at (1*\X,0*\Y) [circle, 
fill=black,label=below:$u_2$] {};
\node[minimum size=1mm,inner sep=0pt] (u2) at (2*\X,0*\Y) [circle, 
fill=black,label=below:$u_3$] {};
\node[minimum size=1mm,inner sep=0pt] (u3) at (3*\X,0*\Y) [circle, 
fill=black,label=below:$u_4$] {};
\node[minimum size=1mm,inner sep=0pt] (v1) at (1.5*\X,-1*\Y) [circle, 
fill=black,label=below:$v_1$] {};

\draw  (u0)--(u1)--(u2)--(u3);
\draw[decorate, decoration={brace}] ([xshift = .5ex,yshift=.5ex]u0.center) -- 
([xshift = -.5ex,yshift=.5ex]u1.center) node[midway, above] {$>\frac{3}{10}\diam_4(G)$};
\draw[decorate, decoration={brace}] ([xshift = .5ex,yshift=.5ex]u1.center) -- 
([xshift = -.5ex,yshift=.5ex]u2.center) node[midway, above] {$>\frac{3}{10}\diam_4(G)$};
\draw[decorate, decoration={brace}] ([xshift = .5ex,yshift=.5ex]u2.center) -- 
([xshift = -.5ex,yshift=.5ex]u3.center) node[midway, above] {$>\frac{3}{10}\diam_4(G)$};
\end{tikzpicture}
\end{tabular}
\caption{The tree $T_1$ as a path. Vertices of degree two not in 
$D_1$ are not drawn.}
\label{gpath}
\end{figure}
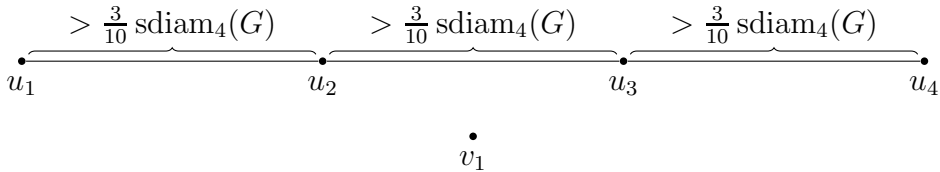

Now, $T_1$ is composed of three paths between elements of $D_i$. 
By Corollary~\ref{distlemobs}, each of these paths has length at least 
$\dfrac{3}{10}\diam_4(G)$. So 
$$
\rad_4(G)\geq ||T_1||>\dfrac{9}{10}\diam_4(G),
$$ which contradicts equation \eqref{4radsm}. 

Next, we suppose $T_1$ has exactly three leaves. Label 
them as $u_1,u_2$, and $u_3$. Let $u_4$ be the element of $D_1$ which is an 
interior vertex of $T_1$ and let $s$ be the vertex of degree 3 in $T_1$. It is 
possible that $s=u_4$. Without loss of generality, suppose that $u_4$ lies on the 
$s-u_3$ path in $T_1$. Define the following distances as illustrated in Figure~\ref{g4small}. 
$$
\begin{array}{cc}

a:=d_{T_1}(u_1,s)
&
b:=d_{T_1}(u_2,s)\\
c:=d_{T_1}(u_3,u_4)
&
d:=d_{T_1}(u_4,s).
\end{array}
$$

\begin{figure}[ht!]
\centering
%\SingleSpacing
\begin{tabular}{c}
\begin{tikzpicture}%[every node/.style={draw=black,thick,circle,inner sep=0pt,fill=black}]
\newcommand\X{4};
\newcommand\Y{1};
\node[minimum size=1mm,inner sep=0pt] (v2) at (0,0) [circle, 
fill=black,label=below:$u_1$] {};

\node[minimum size=1mm,inner sep=0pt] (v0) at (0,2*\Y) [circle, 
fill=black,label=above:$u_2$] {};
\node[minimum size=1mm,inner sep=0pt] (s) at (1*\X,1*\Y) [circle, 
fill=black,label=above:$s$] {};
\node[minimum size=1mm,inner sep=0pt] (v3) at (2*\X,1*\Y) [circle, 
fill=black,label=right:$u_3$] {};
\node[minimum size=1mm,inner sep=0pt] (v1) at (1.5*\X,0*\Y) [circle, 
fill=black,label=below:$v_1$] {};
\node[minimum size=1mm,inner sep=0pt] (v4) at (1.5*\X,1*\Y) [circle, 
fill=black,label=below:$u_4$] {};
\draw  (s) edge (v3) (s) edge (v0);
\draw[decorate, decoration={brace}] ([xshift = 1ex,yshift=0ex]v0.center) -- 
([xshift = -.5ex,yshift=.5ex]s.center) node[midway, above] {$b$};
\draw[decorate, decoration={brace}] ([xshift = .5ex,yshift=.5ex]v2.center) -- 
([xshift = -1ex,yshift=0ex]s.center) node[midway, above] {$a$};
\draw[decorate, decoration={brace}] ([xshift = .5ex,yshift=.5ex]s.center) -- 
([xshift = -.5ex,yshift=.5ex]v4.center) node[midway, above] {$d$};
\draw[decorate, decoration={brace}] ([xshift = .5ex,yshift=.5ex]v4.center) -- 
([xshift = -.5ex,yshift=.5ex]v3.center) node[midway, above] {$c$};
\draw (v2) edge (s);
\end{tikzpicture}
\end{tabular}
\caption{The tree $T_1$ with only three leaves. Vertices of degree two not in 
$D_1$ are not drawn.}
\label{g4small}
\end{figure}
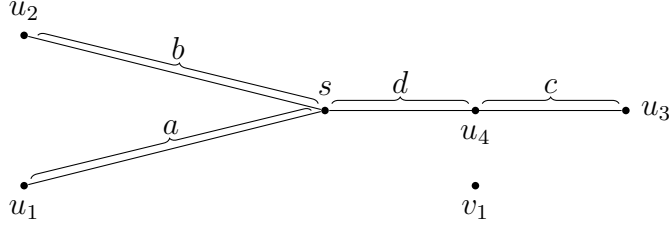

Consider the following sum:
$$
(a+b)+(a+d)+(b+d)+2c=2a+2b+2c+2d.
$$
The right hand side of this equation counts each edge of $T_1$ twice. Hence, by equation \eqref{4radsm},
\begin{equation}
\label{eq4}
2a+2b+2c+2d=2\|T_1\|\leq 2\rad_4(G)<\frac{14}{10}\diam_4(G).
\end{equation}
But Corollary~\ref{distlemobs} implies that the left hand side of the equation is bounded below by
\begin{align*}
(a+b)+(a+d)+(b+d)+2c
&\geq 5\cdot \min\{ d_{T_1}(u_i,u_j):1\leq i\neq j\leq 4 \}\\
&>5\cdot \frac{3}{10}\diam_4(G)\\
&=\frac{15}{10}\diam_4(G),
\end{align*}
which contradicts equation \eqref{eq4}.
The remainder of the proof for $k=4$ is considered in Section~\ref{ub4}.
\end{proof}

\bibliography{thesis}{}
\bibliographystyle{plain}
%\printbibliography
%\bibliographystyle{plain}
\end{document}